\documentclass[reqno]{amsart}

\usepackage{verbatim} 
\usepackage{amsmath}
\usepackage{amsfonts}
\usepackage{amssymb}
\usepackage{mathrsfs}
\usepackage{amsthm}
\usepackage{newlfont}
\usepackage{mathtools}

\usepackage{fullpage,amsmath, amssymb,amscd}
\usepackage{latexsym, epsfig, color}
\usepackage[mathscr]{eucal}
\usepackage{
graphicx}
\usepackage{amscd}
\usepackage[all, cmtip]{xy}

\usepackage[textwidth=50,textsize=tiny]{todonotes}
\usepackage{colonequals}
\usepackage{xcolor}

\setlength{\marginparwidth}{2cm}


\newcommand\cyr{%
 \renewcommand\rmdefault{wncyr}%
 \renewcommand\sfdefault{wncyss}%
 \renewcommand\encodingdefault{OT2}%
\normalfont\selectfont} \DeclareTextFontCommand{\textcyr}{\cyr}

\newtheorem{theorem}{Theorem}
\newtheorem{lemma}[theorem]{Lemma}
\newtheorem{corollary}[theorem]{Corollary}
\newtheorem{proposition}[theorem]{Proposition}
\theoremstyle{remark} \newtheorem{remark}[theorem]{Remark}

\theoremstyle{definition}
\newtheorem{definition}[theorem]{Definition}

\def\Z{\mathbb Z}
\def\N{\mathbb N}

\def\Frob{\operatorname{Frob}}

\def\mod{\operatorname{mod}}

\def\exp{\operatorname{exp}}

\def\log{\operatorname{log}}

\def\Spec{\operatorname{Spec}}
\def\Spf{\operatorname{Spf}}

\begin{document}

\title[Universal $p$-adic sigma functions]{On the universal $p$-adic sigma and Weierstrass zeta functions}

\author[C. Blakestad]{Clifford Blakestad}
\email{math.blakestad@gmail.com }

\author[D. Grant]{David Grant}
\address{Department of Mathematics\\ University of Colorado Boulder\\
Boulder, CO 80309-0395 USA}
\email{grant@colorado.edu}

\date{\today}

\begin{abstract}
For primes $p>3$ we produce a new derivation of the universal $p$-adic sigma function and $p$-adic Weierstrass zeta functions of Mazur and Tate for elliptic curves with good ordinary or multiplicative reduction by a method that highlights congruences among coefficients in Laurent expansions of elliptic functions, and works simultaneously for generalized elliptic curves  defined by Weierstrass equations. 
\end{abstract}

\subjclass[2010]{Primary 11G07; Secondary 14G27}

\keywords{Elliptic curves, $p$-adic sigma functions}

\thanks{The first-named author was partially supported by NRF 2018R1A4A1023590 and NRF 2017R1A2B2001807}
\maketitle

\section{Introduction.}

Let $A$ be a complete discrete valuation ring with uniformizer $\pi$ of residue characteristic $p>2,$ and $E$ an elliptic curve over $A$ with good ordinary or multiplicative reduction modulo $\pi$. In the 1980s Mazur and Tate introduced a ``$p$-adic sigma function $\sigma_{E/A}$'' defined on the kernel of reduction of $E$ modulo $\pi,$ which shares many of the function-theoretic properties of the classical complex-valued sigma function.
It is a power series in one variable over $A,$ 
which they used to compute $p$-adic local heights of points on elliptic curves
in their investigations of the $p$-adic Birch Swinnerton-Dyer Conjecture [MST], [MTT].


The details of the construction appeared in a 1991 paper [MT]. In it they defined division polynomials for arbitrary isogenies of elliptic curves. 
They then constructed $\sigma_{E/A}$ using limits of normalized division polynomials for the isogenies $E_n\rightarrow E$ dual to the isogeny $E\rightarrow E_n$ gotten by modding out $E$ by the $p^n$-torsion in the kernel of reduction modulo $\pi$.
They also gave a multitude of equivalent conditions that uniquely characterize $\sigma_{E/A}$ (see \S3).


This circle of ideas has attracted the attention of a number of authors.
Independently, using an idea he attributed to Mumford, Norman used algebraic theta functions to construct essentially the same function \cite{N}.  
His construction worked for ordinary abelian varieties of any dimension. Norman also recognized his function as one of a class constructed earlier by Barsotti and Cristante \cite{Cr1}
 (but one that satisfies an integrality condition). Simultaneously, Cristante himself used his earlier work directly to produce integral theta functions \cite{Cr2}.
 Mazur and Tate provide references to earlier related results, and interpret the existence of $\sigma_{E/A}$  in terms of biextensions of $E\times E$ by $\mathbb G_m$ and the cubical structures of Breen [Br]. An alternative interpretation of $\sigma_{E/A}$ for $A$ an extension of the $p$-adic numbers was given by Balakrishnan and Besser, who showed that the logarithm of  $\sigma_{E/A}$ is a Coleman function [BB]. When $A$ has characteristic $p$, Papanikolas gave a different explicit formula for $\sigma_{E/A}$ [P].

Mazur and Tate also showed that their construction carried over to more general base schemes, and having done so, could 
be used to define a  ``$\sigma$-functor'' for ordinary elliptic curves over the category of formal adic schemes for which $p$ can be taken as an ideal of definition, 
uniquely determined by being compatible with base change, and by recovering their construction above for elliptic curves  
over complete DVRs with good ordinary reduction. 

For understanding such an important function, one can never have too many arrows in one's quiver.  
The goal of this paper is come up with a different construction of a ``universal $p$-adic sigma function," 
a power series attached to a generic Weierstass equation, that specializes to produce 
$\sigma_{E/A}$ for any elliptic curve with good ordinary or multiplicative reduction over a complete DVR $A$ with residue characteristic $p>3$.


To motivate our construction, we recall one of Mazur and Tate's equivalent formulations of
$\sigma_{E/A}$: Let $A$ be a complete DVR of characteristic 0 and residue characteristic $p>3$, $F$ its field of fractions, and $E$ be given by a Weierstrass model $y^2=x^3+a_4 x+a_6$, $a_4, a_6\in A$. Let  $t=-x/y$,  a parameter at the origin $O$ on the generic fibre of $E$, and $D$ the $F$-derivation on the function field of $E$ determined by $D(x)=2y$. Then expanding $x$ at $O$, standard calculations (see \cite{Si} IV.1) show one can consider $x$ as an element of $A((t))$, the ring of Laurent series in $t$ with coefficients in $A,$ and that $D$ extends to an $A$-derivation of $A((t))$.
Then $\sigma_{E/A}$ is the unique odd power-series in $t$ over $A$ whose lead term is $t$, such that
$D(D(\sigma_{E/A})/\sigma_{E/A})$
is $-x$ plus an element in $A$. 
(This characterization was the basis of Algorithm 3.1 in \cite{MST}.)

We take this as our starting point. Let $p>3$ be prime. For independent indeterminates $A_4$ and $A_6,$ let $\cal E$ be the projective closure of the curve 
given by $$y^2=f(x)=x^3 + A_4 x + A_6$$ over $R=\mathbb Z[\frac{1}{6}][A_4,A_6].$ 
Let $H$ be the coefficient of $x^{p-1}$ in the expansion of $f(x)^{(p-1)/2}$, which 
reduces to the Hasse-invariant of $\cal E$ modulo $p$ on the locus where it is elliptic. 

Let  $R_H=R[{1\over H}]$ and $\hat{R}=\underset{n}{\varprojlim}~R_H/p^nR_H$ be its $p$-completion. 
We show in \S2 that $\cal E$, now considered as a curve over $\hat{R}$,  defines a generalized elliptic curve with at worst nodal fibres [Co1], which is ordinary where elliptic (in short, a ``Weierstrass ordinary generalized elliptic curve") and we show in fact that $\cal E$ is the universal Weierstrass ordinary generalized elliptic curve over $p$-complete rings.  
(We say a ring $B$ is $p$-complete if the natural map $B\rightarrow \underset{n}{\varprojlim}~ B/p^nB$ is an isomorphism.)

Let $\hat{K}$ be the fraction field of $\hat R$. Also let  $t=-x/y$,  a parameter at the origin $O$ on the generic fibre of $\cal E$, and $D$ be the $\hat K$-derivation on the function field of $\cal E$ determined by $D(x)=2y$. As above, expanding $x$ at $O$, one can consider $x$ as an element $x(t)$ of $\hat R((t))$, the ring of Laurent series in $t$ with coefficients in $\hat R,$ and one can show that $D$ extends to an $\hat R$-derivation of $\hat R((t))$ (see \S2 for details). Let  $\hat{R}[[t]]$ denote the ring of power series in $t$ with coefficients in $\hat{R}.$ The same standard calculations show
that $Dt$ is invertible in $\hat{R}[[t]]$, and we set $W(t)=1/Dt.$

We will construct
 the universal  $p$-adic sigma function $\sigma_{\cal E/\hat{R}}(t)$ attached to $\cal E/R$, which is
the unique power series in $\hat{R}[[t]]$, odd under $t\mapsto -t$, and with lead term $t$, such that $D(D\sigma_{\cal E/\hat{R}}(t)/\sigma_{\cal E/\hat{R}}(t))+x(t)\in \hat{R}.$ 
The logarithmic derivative $D\sigma_{\cal E/\hat{R}}(t)/\sigma_{\cal E/\hat{R}}(t)$ 
will be the ``universal $p$-adic Weierstrass zeta function''  $\zeta_{\cal E/\hat{R}}(t)$.

In practice, we work in the opposite direction, constructing $\zeta_{\cal E/\hat{R}}(t)$ first. In brief detail, let $\hat{E}$ be the elliptic curve which is the basechange of $\mathcal E$ to $\hat{K}$. 
in Proposition \ref{betaprop}, for all $n\geq 1$, we study the unique function $z_n$ on $\hat{E}$, which is regular except at the origin, and whose expansion there is of the form $t^{-p^n}+H_n/t+I_n,$ for some $H_n\in \hat{K}$, $I_n\in \hat{K}[[t]]$. We show in fact that this expansion lies in $\hat{R}((t))$. Note that $z_1$ $\mod{p}$ was central in Hasse's study of his now eponymous invariant \cite{Has}, which is also given by $H_1\mod{p}$ (see also 
\cite{Vo2}.) Since $H=H_1\mod{p},$ $H_1$ is invertible in $\hat{R}$, and from that one can show that all $H_n$ are invertible in $\hat{R}$. Letting
$\zeta_n=H_n^{-1}(t^{-p^n}-z_n)$, we show that $D(\zeta_n)+x$ is congruent mod $p^n$ to some constant in $\hat{R}$, 
so if $\zeta$ is the term-by-term limit of the $\zeta_n$, it is not hard to show that it is the universal $p$-adic Weierstrass zeta function.
We note that the uniqueness of $z_n$ shows it is an odd function on $\hat{E}$, and so $\zeta_{\cal E/\hat{R}}(t)$ is odd in $t$.



Now set $\tilde{\zeta}_{\cal E/\hat{R}}(t)=\zeta_{\cal E/\hat{R}}(t)-Dt/t$, which lies in $\hat{R}[[t]]$.
Let $\Lambda(t)$ denote the integral with respect to $t$ of $\tilde{\zeta}_{\cal E/\hat{R}}(t)W(t)$ in $\hat{R}[[t]]\otimes \mathbb Q$ that has no constant term. Then if we set $\tilde{\sigma}(t)=\exp(\Lambda(t))$,  we get an even power series in $t$ with constant term 1 and with coefficients in $\hat{R}\otimes \mathbb Q$. If we then define $\sigma_{\cal E/\hat{R}}(t)=t \tilde{\sigma}(t)$, an odd power series in $t$ with lead term $t$, a calculation shows that $D\sigma_{\cal E/\hat{R}}(t)/\sigma_{\cal E/\hat{R}}(t)=\zeta_{\cal E/\hat{R}}(t)$, and the main goal of the paper is to show that $\sigma_{\cal E/\hat{R}}(t)$ actually has coefficients that lie in $\hat{R}$.
We will do that by using a version of Hazewinkel's  functional equation lemma applied to $\tilde{\sigma}(t)$ (see Corollary \ref{FunEqCor}).
That requires two things:

I) We need an endomorphism $\alpha:\hat{R}\rightarrow \hat{R}$ that lifts the Frobenius on $\hat{R}/p\hat{R}.$
We achieve this by finding a canonical subgroup $C$ of order $p$ in $\hat{E},$ and writing down a Weierstrass model for $E'=\hat{E}/C$ of the form $y'^2=x'^3+A_4'x+A_6'$, normalized so that if $\phi$ is the natural isogeny from $\hat{E}$ to $E'$, and $\omega'=dx'/2y'$, then
$\phi^*(\omega')={p\over H}\omega.$ We show in Proposition \ref{moddingout} that $A_4', A_6'\in\hat{R},$ and that we can take $\alpha$ to be the endomorphism on $\hat{R}$ induced by $A_4\rightarrow A_4', A_6\rightarrow A_6'$. 

II) We need a functional equation for $\Lambda(t),$ which we obtain in Proposition \ref{zetas} by proving that 
$$\Lambda(t)-{1\over p}\alpha(\Lambda)(t')\in \hat{R}[[t]],$$
where $t'=-x'/y'$, and we extend $\alpha$ to a map on $\hat{R}[[t]]\otimes \mathbb Q$ by acting on coefficients.

These construction are given in \S2. We will also need to verify in \S3 that $\sigma_{\cal E/\hat{R}}(t)$ universally satisfies at least one of the other equivalent characterizations of  $\sigma_{E/A}$ given by Mazur and Tate, to guarantee that $\sigma_{\cal E/\hat{R}}(t)$
specializes to $\sigma_{E/A}$ when $A$ is an equicharacteristic complete DVR as well. 
Once having done this, it is a formality in \S4 to verify that $\sigma_{\cal E/\hat{R}}(t)$ can be used
to recover the $\sigma$-functor of Mazur and Tate.

Our motivation for finding a different approach to the construction of 
 $p$-adic sigma functions was to provide a potential
path to generalizations to curves of higher genus and abelian varieties of higher dimension. Indeed some of this work --- 
in many ways more hands-on than [MT] --- proved useful in the PhD thesis of the first-named author, 
who constructs the universal $p$-adic sigma function for jacobians of curves of genus 2
in a form amenable for calculation [Bl].

We would like to thank the referee for numerous helpful suggestions that greatly improved the exposition of these results.

While this paper was in revision, our friend John Tate passed away. It is our honor to dedicate this paper to his memory.


\section{Preliminaries and statements of results}
\begin{center}\item\subsection{On WOGECs}\end{center}


Let $p>3$ be a prime. All rings will be commutative with identity.

As in the Introduction,
let $A_4$ and $A_6$ be independent indeterminates over $\mathbb Z$. 
Let $\cal E$ be the projective closure of
\begin{equation}\label{GenEqn}
y^2=f(x)=x^3+A_4x+A_6,
\end{equation}
over $R=\mathbb Z[{1\over 6}][A_4,A_6]$. 
We standardly set $C_4=-48A_4,$ $C_6=864A_6,$ and $\Delta=-16(4A_4^3+27A_6^2)=(C_4^3-27C_6^2)/1728.$
We let $H$ be the coefficient of $x^{(p-1)/2}$ in $f(x)^{(p-1)/2}.$  

To fix notation, for a given ring $A$, we will often specialize (\ref{GenEqn}) to an equation
\begin{equation}\label{SpeEqn}
y^2=x^3+a_4x+a_6,~a_4, a_6\in A,
\end{equation}
and let $\delta,$ $c_4$, $c_6,$ and $h$ denote the corresponding specializations of $\Delta$, $C_4$, $C_6$, and $H$.   


Indeed, one case we will consider is that $A$ is a complete discrete valuation ring with field of fractions $F$, and (\ref{SpeEqn}) is a minimal model over $A$ of an elliptic curve $E$ over $F$ which has multiplicative reduction. In that case (\ref{SpeEqn}) is not elliptic over $A$, but rather an example of a generalized elliptic curve over $A$. \footnote{We refer the reader to \cite{Co1} and \cite{Co2} for background on generalized elliptic curves. Following \cite{Co2}, a stable genus-1 curve $X$ over a scheme $S$ is a scheme that is proper, smooth, and of finite presentation over $S$, with all its fibres over geometric points being smooth curves of genus 1 or N\'eron $n$-gons. We let $X^{sm}$ denote its smooth locus. A generalized elliptic curve $E/S$ is a stable genus-1 curve over $S$, along with a section $e \in E^{sm}(S)$, and a map $+:E^{sm}\times E\rightarrow E$ such that $+$ restricts to turn $(E^{sm},e)$ into a commutative group scheme with cyclic geometric component group.}

\begin{remark} We do not require anything from the theory of generalized elliptic curves, but the following is motivation for the definition below: (I) a Weierstrass cubic over a ring $A$ whose geometric fibres are either elliptic curves or nodal cubics is a generalized elliptic curve. If $6$ is invertible in $A$, we can change models to put it in the form (\ref{SpeEqn}); (II) \cite{Co2} explains that the Riemann-Roch Theorem shows that any generalized elliptic curve with geometrically irreducible fibres and a choice of section through the smooth locus can be given locally on the base by a Weierstrass cubic (for details over a locally noetherian base scheme, see  \S2.25 in \cite{Hi}). 
\end{remark}






Recall we say an elliptic curve in characteristic $p$ is ordinary if its Hasse invariant is nonzero. We refer the reader to \cite{KM} 12.4, and \cite{L} Appendix 2, \S5, 
for equivalent characterizations of the Hasse invariant, one of which is (for $p>3$) that it's given by $h$ for an elliptic curve defined by an equation (\ref{SpeEqn}) over a ring of characteristic $p$.

\bigskip
\begin{definition} Let $A$ be a ring where 6 is invertible.

1) A Weierstrass generalized elliptic curve over $A$ is a 
curve over $A$ defined by a Weierstrass equation as in (\ref{SpeEqn}), all of whose fibres over geometric points are either elliptic curves or nodal cubics. Any two are said to be isomorphic over $A$ if there a unit $u\in A$ such that $a_4'=u^4a_4$ and $a_6'=u^6a_6.$

2) A Weierstrass ordinary generalized elliptic curve (WOGEC) over $A$ is a Weierstrass generalized elliptic curve over $A$ whose elliptic fibres over geometric points are ordinary.    
\end{definition}


Important examples of WOGECs are the minimal models of elliptic curves over complete DVRs
that have either good ordinary or multiplicative reduction.

For $p>3$, we will see shortly that there is a convenient way to characterize WOGECs over a ring $A$ whose closed points all have residue characteristic $p$. 
For that we recall from \cite{Si}, III, \S 1, that for a curve (\ref{SpeEqn}) defined over a field,  it is singular exactly when $\delta=0$, in which case it is a nodal cubic if and only if $c_4\neq 0$. First we need a lemma.


\begin{lemma}\label{hasse} Suppose $p>3$. Then $H^2\equiv C_4^{(p-1)/2}\mod{(p,\Delta)}$.
	
\end{lemma} 

\begin{proof} 
Since $p>3$, $2$ and $3$ are invertible in $\mathbb Z/p\mathbb Z$, so $R/(p,\Delta)=\mathbb Z[C_4,C_6]/(p,\Delta).$
Let $C_2$ be an indeterminate.
Since $1728 \Delta=C_4^3-C_6^2$, there is an injection
$\mathbb Z[C_4,C_6]/(p,\Delta)\rightarrow \mathbb Z[C_2]/(p)$ given by $C_4=C_2^2, C_6=C_2^3$. Viewing $H$ as a polynomial in $A_4$ and $A_6$, it now suffices to show that $H(-C_2^2/48,C_2^3/864)\equiv C_2^{(p-1)/2}\mod{p}.$
 But when $A_4=-C_2^2/48$ and $A_6=C_2^3/864$, $f(x)=(x-C_2/12)^2(x+C_2/6)$,
and then it is easy to verify that the coefficient of $x^{p-1}$ in $f^{(p-1)/2}$ is the same modulo $p$ as the coefficient of $x^{p-1}$ in $(x^2(x+C_2/4))^{(p-1)/2}$, which
is $(C_2/4)^{(p-1)/2}\equiv C_2^{(p-1)/2}\mod{p}$, as needed.
\end{proof}

\begin{corollary}\label{Criterion} Let $p>3$ and $A$ be a ring whose closed points all have residue fields of characteristic $p.$
Let $X$ be a Weierstrass generalized elliptic curve defined over $A$ by a model as in (\ref{SpeEqn}), and $h$ and $\delta$ the corresponding specializations from $R$ of $H$ and $\Delta$.
Then $A=(\delta,h).$ If in addition
$X$ is a WOGEC, $h$ is invertible in $A.$

Conversely, if $X$ is a projective curve over $A$ defined by a model of the form (\ref{SpeEqn}) such that $h$ is invertible in $A$, then $X$ is a WOGEC.
\end{corollary}

\begin{proof}
If the ideal $(\delta,h)$ were not the unit ideal  in $A$, there would be a maximal ideal $\mathfrak{m}$ containing it.
Let $\iota$ be an arbitrary embedding of the field $A/\mathfrak{m}$ into an algebraic closure $\overline{A/\mathfrak{m}}$.
Then $\iota(\delta)=0$ so $X$ is not elliptic over $\overline{A/\mathfrak{m}}$.
By the definition of Weierstrass generalized elliptic curve, $X$ must be a nodal cubic over $\overline{A/\mathfrak{m}}$, which means $\iota(c_4)\neq 0$ and so $c_4$ cannot be in $\mathfrak{m}$.
Together with the fact that $p$ must be in $\mathfrak{m}$ by assumption, Lemma \ref{hasse} forces $\iota(h) \neq 0$ and hence $h$ cannot be in $\mathfrak{m}$, contrary to assumption.

Now assume in addition that $X$ is a WOGEC. If $h$ were not a unit, it would be contained in a maximal ideal $\mathfrak{m}$.
By the above, $\delta$ is not in $\mathfrak{m}$.
Hence for any embedding $\iota:A/\mathfrak{m}\rightarrow \overline{A/\mathfrak{m}}$, we have $\iota(\delta)\neq 0,$ so $X$ is not elliptic over $\overline{A/\mathfrak{m}}$.
But since $\iota(h)=0$, 
$X$ is not ordinary over $\overline{A/\mathfrak{m}}$.
Thus $X$ cannot be a WOGEC unless $h$ is a unit.

Conversely, let $X$ be projective over $A$ given by equation (\ref{SpeEqn})
and suppose that $h$ is invertible in $A$. 
For those maximal ideals $\mathfrak{m}$ of $A$ containing $\delta$, Lemma \ref{hasse} implies the associated fibres of $X$ must be nodal. 
This means $X$ is a Weierstrass generalized elliptic curve.
For those maximal ideals $\mathfrak{m}$ not containing $\delta$,  the fibre of $X$ over $\mathfrak{m}$ has non-zero Hasse invariant so is ordinary, making $X$ a WOGEC.
\end{proof}

\begin{definition}
Let $R_H=R[{1\over H}]$, and $\hat{R}=\underset{n}{\varprojlim}~R_H/p^nR_H$
be its $p$-completion.
\end{definition}

From now on we will consider $\cal E$ as a scheme over $\hat{R}.$

\begin{definition}\label{rho-specialization} If $X$ is a WOGEC over a $p$-complete ring $A$ given by a model as in (\ref{SpeEqn}), and $\rho$ is a continuous ring homomorphism from $\hat{R}$ to $A$
such that $\rho(A_4)=a_4$ and $\rho(A_6)=a_6,$ then we will say that $X$ is a $\rho$-specialization of $\cal E$ and write $X={\cal E}_\rho$. 
\end{definition}

\begin{proposition}\label{specialization} 1) $\mathcal E/\hat{R}$ is a WOGEC over $\hat{R}$.

2)  $\cal E$ is the universal WOGEC over $p$-complete rings $A$ with $p>3,$ in the sense that any WOGEC over $A$ is uniquely a $\rho$-specialization of $\cal E.$
\end{proposition}

\begin{proof}
1)  Since every element in $1+p\hat{R}$ is a unit, every maximal ideal of $\hat{R}$ contains $p$. Also $H$ is invertible in $\hat{R}$ by construction, so Corollary \ref{Criterion} gives the result.

2) Let $X$ be a WOGEC over a $p$-complete ring $A$ given by a model as in (\ref{SpeEqn}). All we need to show is that there is a unique continuous ring homomorphism from $\hat{R}$ to $A$ sending $A_4\rightarrow a_4$ and $A_6\rightarrow a_6.$ There is a unique evaluation map $\rho: R\rightarrow A$ with this property, which send $H$ to $h$. Since all the geometrically closed points of $Spec(A)$ have residue characteristic $p$, by Corollary \ref{Criterion}, $h$ is invertible in $A.$ Hence $\rho$ extends uniquely to a ring homomorphism from $R_H$ to $A.$ Since $A$ is $p$-complete, $\rho$ extends uniquely  to a continuous ring homomorphism from $\hat{R}$ to $A.$
\end{proof}

\begin{center}\item\subsection{On derivations and expansions at infinity}\end{center}

The two affine schemes over $\hat{R},$ $U=\Spec(\hat{R}[x,y]/(y^2-x^3-A_4x-A_6))$, $V=\Spec(\hat{R}[t,w]/(w-t^3-A_4tw^2-A_6w^3))$ are an open cover of $\cal E,$ glued together by $t=-x/y,w=-1/y$ on their overlap. 

We will denote the $\hat{R}$-point $(0,0)$ on $V$ by $\infty$. 
We note that $\infty$ is defined on $V$ by the ideal $I_\infty=(t,w)$ in the coordinate ring $\cal O(V)$ of $V$, and (\cite{Si}, IV, \S1, Proposition 1.1, applied with $a_1=a_2=a_3=0$) shows that we can identify $\varprojlim\cal O(V)/I_\infty^n$ with $\hat{R}[[t]],$ the ring of power series in $t$ with coefficeints in $\hat R$, and we will consider $\cal O(V)$ as embedded in $\hat{R}[[t]].$ Let $\hat K$ denote the field of fractions of $\hat R$, so the function field $\hat{K}(\cal E)$ of $\cal E$ over $\hat{K}$ is the fraction field of $\cal O(U)$ or $\cal O(V).$ 
We will likewise consider $\hat{K}(\cal E)$ as embedded in $\hat{K}((t))$, the ring of Laurent series in $t$ with coefficients in $\hat{K}$, which is the fraction field of $\hat R[[t]].$



On the generic fibre of $\cal E$ (which is elliptic), there is an invariant differential given by $\omega = \frac{dx}{2y},$ which induces a $\hat K$-derivation $D$ on 
$\hat K(\cal E)$
by $D(g)=\frac{dg}{\omega}$.
Computing this on $x$ and $y$ yield $$D(x)=2y, D(y)=f'(x)=3x^2+A_4.$$
Note that $D(x)$ and $D(y)$ are in $\cal O(U),$ and that $x$ and $y$ generate $\cal O(U)$ over $\hat{R}$. Because the only relation on $x$ and $y$ is $y^2=f(x),$ and $D(x)$ and $D(y)$ are consistent with the equation $D(y^2)=D(f(x))$, $D$ restricts to an $\hat{R}$-derivation $D: \cal O(U) \rightarrow \cal O(U)$.

Similarly, one computes that 
\begin{equation}\label{tDeriv}
D(t)=(xD(y)-yD(x))/y^2=1-2A_4tw-3A_6w^2,
\end{equation}
and $$D(w)=D(y)/y^2=3t^2+A_4w^2,$$
which are consistent with the equation $D(w)=D(t^3+A_4 t w^2 + A_6 w^3),$ so $D$ restricts to an $\hat{R}$-derivation $D:\cal O(V) \rightarrow \cal O(V)$.
It follows furthermore from (\ref{tDeriv}) that $D$ extends to an $\hat R$ derivation on $\hat{R}[[t]]$, and thence to an $\hat R$ derivation on the ring $\hat{R}((t))$ of Laurent series in $t$ over $\hat{R}$.


Suppose that $\rho$ is a continuous map from $\hat R$ to a $p$-complete ring $S$. 
If $\cal E_\rho$ is the $\rho$-specialization of $\cal E$, we can view it as the base change $Spec(S)\times_{Spec(\hat{R})}\cal E$ induced by $\rho$, whose second projection we denote by $p_2$.
We can then analogously define the $S$-derivation $D_\rho$ on $\cal O(p_2^{-1}(U))$ such that
$D_\rho(x)=2y$,
which \emph{mutatis mutandis} extends to $\cal O(p_2^{-1}(V))$ and then to $S((t)).$ 
It follows that if we let $\rho$ also denote the map from $\hat R((t))$ to $S((t))$ gotten by letting $\rho$ acts on coefficients of Laurent series, then
\begin{equation}\label{Dcommutes}
D_\rho\circ \rho=\rho\circ D.
\end{equation}
By abuse of notation we will also let $D$ denote $D_\rho,$ and then abbreviate (\ref{Dcommutes}) by saying that $D$ commutes with $\rho$-specialization. In particular, $D$ will then commute with reduction mod $p\hat{R}.$

In $\hat{R}((t))$, we standardly get
that 
the expansions of $\omega/dt, x, $ and $y$ in terms of $t$ have the forms (\cite{Si}, IV, \S1, setting $a_1=a_2=a_3=0$)
\begin{equation}\label{tExpand}
{{\omega}\over{dt}}:=W(t):=\sum_{n=0}^\infty w_nt^n \in 1+t^4\hat R[[t]],
~x(t)\in{1\over{t^2}}+t^2\hat R[[t]],~ y(t)\in-{1 \over{t^3}}+t\hat R[[t]].
\end{equation}
(We note that these calculations work for Weierstrass equations defined over any ring, and do not require the discriminant of the Weierstrass equation to by invertible in the ring.)

With this we get $Dt=1/W(t)$, and hence the action of $D$ on an element $a(t)\in \hat{R}((t))$ is
$D(a(t))=\frac{1}{W(t)}\frac{da}{dt},$
and more generally for $\rho$ and $S$ as above, that for any element 
  $b(t)\in S((t))$,
\begin{equation}\label{Daction}
D(b(t))={1\over{\rho(W)(t)}}{{db}\over {dt}}.
\end{equation}

\begin{center}\item\subsection{On weights}\end{center}

We consider $R=\Z[\frac{1}{6}][A_4,A_6]$ as an $\N$-graded ring by giving elements of $\Z$ weight 0 and $A_4$ a weight of 4 and $A_6$ a weight of 6 (hence the subscripts).
These weights are specifically chosen to match their weights as modular forms (see Remark \ref{modular}).

We can then extend this grading to the ring
$R[x]$ by giving $x$ a weight of 2. Then $f(x)=x^3+A_4x+A_6$ is homogeneous of weight of 6, so the weight extends to the quotient ring $R[x,y]/(y^2-f(x))$
by giving $y$ a weight of 3. The weight then extends uniquely to its fraction field, which is then a $\Z$-graded ring, whereby $t=-x/y$ has weight $-1$. 

Note that $f(x)^{(p-1)/2}$ is homogeneous of weight $3(p-1)$, so its coefficient $H$ of $x^{p-1}$ is homogeneous of weight $p-1$. Hence the weight extends to the localization $R_H$ of $R$ which is then a $\Z$-graded ring. 


We defined the $p$-completion $\hat{R}$ of $R$ as the inverse limit as rings over $m$ of $R_H/p^mR_H$, which is not a graded ring in the weight inherited from $R_H$. However, $\hat{R}$
has a graded subring $\hat{R}_g$ which we can identify with the
inverse limit as $\Z$-graded rings of the $R_H/p^mR_H$.

To do so, for any integer $n$, let $(R_H)_n$ denote the subgroup of homogeneous elements of $R_H$ of weight $n$, and 
define the subgroup $\hat{R}_n$ of $\hat{R}$ as the inverse limit over $m$ of the groups $(R_H)_n/p^m(R_H)_n$. 
We set $\hat{R}_g=\oplus_{n\in\Z}\hat{R}_n.$ We will only use the word ``weight" to apply to an element of $\hat{R}$ if it
lies in some $\hat{R}_n$. 

Note however for any $\alpha\in \hat{R}$, it is the limit of its reductions $\alpha_m \mod p^m,$ each of which is a finite sum of its homogeneous components $\alpha_{m,n}$. Hence if we set $\beta_n:=\lim_{m\rightarrow\infty} \alpha_{m,n}\in \hat{R}_n$, and $\gamma_{T,m}:=\sum_{|n|\leq T}\alpha_{m,n}$, then for every $M$ there is an $T=T(M)$ such that $\gamma_{T,m}\equiv\alpha_m \mod p^m$ for all $m\leq M$. Hence 
if $\gamma_T:=\lim_{m\rightarrow\infty} \gamma_{T,m}=\sum_{|n|\leq T}\beta_{n},$ then $\alpha$ can be written uniquely in the form
$$\alpha=\lim_{T\rightarrow \infty} \gamma_T,$$
which shows how $\alpha$ is uniquely determined by its homogeneous components $\beta_n$.


For every $\kappa \in \Z_p^\times$, we define the grade preserving automorphism $gr_\kappa$ of $R\otimes \Z_p$ that send $A_4\rightarrow \kappa^4 A_4$ and $A_6\rightarrow \kappa^6 A_6$. Since $gr_\kappa(H)=\kappa^{p-1}H$, the map extends to $R_H\otimes \Z_p$ and thence to $\hat{R},$ since it commutes with reduction mod $p^m$.
Note that for any $n\in \Z$, 
for $\xi \in \hat{R}_n$, $gr_\kappa(\xi)=\kappa^n \xi.$ 
Now suppose that $\kappa$ is of infinite order in $\Z_p^\times.$
Using the notation above, if $\alpha \in \hat{R}$ has the property that $gr_\kappa(\alpha)=\kappa^n\alpha$ for some $n\in \Z$, then for any $n'\in \Z$,
$gr_\kappa(\beta_{n'})=\kappa^n\beta_{n'},$ so if $n'\neq n,$
$\beta_{n'}=0,$ and hence $\alpha\in\hat{R}_n.$
We will use this observation without further comment.



For $\kappa\in \Z_p^\times$, by the reasoning above, the map $gr_\kappa$
extends to an automorphism of $\cal O(U)$ by setting $gr_\kappa(x)=\kappa^2x$ and $gr_\kappa(y)=\kappa^3y.$ It then extends to an automorphism of its fraction field $\hat{K}(\cal E).$
We will define the weight $n$ elements in $\hat{K}(\cal E)$
to be the elements which get multiplied by $\kappa^n$ under $gr_\kappa$ for all $\kappa\in \Z_p^\times$.

Note that for every $\kappa\in \Z_p^\times$, since we have $gr_\kappa(t)=\kappa^{-1}t$
and $gr_\kappa(w)=\kappa^{-3}t,$ $gr_\kappa$ on $\hat K(\cal E)$  restricts to an automorphism of $\mathcal O(V),$ which then extends to a continuous automorphism of $\hat{R}[[t]].$ Likewise the automorphism $gr_\kappa$ on $\hat K(\cal E)$ extends to a continuous automorphism of $\hat{K}((t)).$


\begin{center}\item\subsection{Statements of results}\end{center}

Our goal is to show the following, originally due to Mazur and Tate (\cite{MT}, Appendix II.)

\begin{theorem}\label{theorem1}  There is a unique power series $\sigma_{\cal E/\hat{R}}(t)$ in $\hat{R}[[t]]$, odd under $t\rightarrow -t$,
and with lead term $t$, such that $D \left(\frac{D(\sigma_{\cal E/\hat{R}}(t))}{\sigma_{\cal E/\hat{R}}(t)} \right)+x(t)$ is some element $\beta \in \hat{R}$. We call  $\sigma_{\cal E/\hat{R}}(t)$
the universal $p$-adic sigma function.
\end{theorem}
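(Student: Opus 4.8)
The plan is to pass to the logarithmic derivative and reduce the statement to the construction of a single, canonically normalized Laurent series. Set $\zeta := D\sigma/\sigma$, the candidate universal Weierstrass zeta function. Since $t\mapsto -t$ is the hyperelliptic involution — it fixes $x$ and negates $y,\omega,t$ and $D$ — an odd $\sigma$ with lead term $t$ corresponds to an odd $\zeta$ of the form $\zeta = 1/t + O(t)$ with residue $1$, and the correspondence is reversible: from $\zeta$ one recovers $\sigma = t\,\operatorname{exp}\!\bigl(\int (\zeta W - 1/t)\,dt\bigr)$, the constant of integration being fixed by the lead term $t$. Under this dictionary the defining condition $D(D\sigma/\sigma)+x=\beta$ becomes $D\zeta + x = \beta$. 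Thus the theorem is equivalent to the assertion that there is a unique $\beta\in\hat{R}$ for which $D\zeta+x=\beta$ has an odd solution $\zeta\in \tfrac1t+\hat{R}[[t]]$, together with the claim that the resulting $\sigma$ again lies in $\hat{R}[[t]]$.

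Next I would isolate where the arithmetic enters. Writing $\zeta' = d\zeta/dt$ and using $D = W^{-1}\,d/dt$, the condition $D\zeta + x = \beta$ is simply $\zeta' = (\beta - x)\,W$. The right-hand side is an even Laurent series (a product of the even series $\beta - x$ and the even series $W$), hence has vanishing residue, so a formal antiderivative exists and is automatically odd; over $\hat{R}\otimes\mathbb{Q}$ this produces, for every choice of $\beta$, a unique odd $\zeta_\beta = 1/t + O(t)$. The entire content of the theorem is therefore that \emph{exactly one} value of $\beta$ makes all the coefficients of $\zeta_\beta$ — and then those of $\sigma$ — lie in $\hat{R}$ rather than merely in $\hat{R}\otimes\mathbb{Q}_p$. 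Naive term-by-term integration fails this, because $\int t^{n}\,dt = t^{n+1}/(n+1)$ introduces a factor $p$ in the denominator whenever $p\mid n+1$; these are precisely the coefficients of $\zeta$ that must be controlled.

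For existence — the \emph{main obstacle} — I would construct $\zeta$ as a limit of solutions modulo $p^n$, which amounts to proving a system of congruences among the coefficients of the Laurent series $x\,\omega = x(t)W(t)\,dt$ (this is the classical Hasse-style input, and is where inverting $H$ is essential, since $\mathcal{E}$ is ordinary exactly over $R_H$). Granting an integral $\zeta$, I would then deduce the integrality of $\sigma = t\,\operatorname{exp}\!\bigl(\int(\zeta W - 1/t)\,dt\bigr)$ from Hazewinkel's functional equation lemma: the required input is a functional equation relating $\zeta$ on $\mathcal{E}$ to $\zeta'$ on $\mathcal{E}' = \mathcal{E}/C$, where $C$ is the $p$-torsion in the kernel of reduction (the canonical subgroup, available by ordinarity) and $t'$ is the corresponding parameter at infinity. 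Deriving the precise congruences between the coefficients of $\zeta$ and of $\zeta'$ so as to meet the hypotheses of the functional equation lemma is the technical heart of the argument.

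Finally, uniqueness would follow from the non-integrality of the formal logarithm. If $\sigma_1,\sigma_2$ both satisfy the conclusion, with constants $\beta_1,\beta_2$, then their zeta functions obey $(\zeta_1-\zeta_2)' = (\beta_1-\beta_2)W$, whence $\zeta_1 - \zeta_2 = (\beta_1-\beta_2)\,L(t)$, where $L(t) = \int_0^t W\,dt$ is the formal logarithm of the formal group of $\mathcal{E}$. Because $L$ has unbounded $p$-adic denominators — already the coefficient of $t^{p}$ equals $w_{p-1}/p$, and $w_{p-1}$ reduces modulo $p$ to the Hasse invariant, a unit in $R_H/p$ — while $\hat{R}$ is $p$-torsion-free, no nonzero scalar multiple of $L$ can lie in $\hat{R}[[t]]$. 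Hence $\beta_1=\beta_2$, so $\zeta_1=\zeta_2$, and then $\sigma_1=\sigma_2$ since both are odd with lead term $t$.
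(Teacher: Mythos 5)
Your plan follows essentially the same route as the paper's own proof: reduce via the logarithmic derivative to constructing the zeta function, build $\zeta_{\mathcal{E}/\hat{R}}$ as a limit of solutions modulo $p^n$ (Hasse-style congruences among the coefficients of $x\,\omega$, with $H$ inverted), deduce integrality of $\sigma$ from Hazewinkel's functional equation lemma applied to the quotient of $\mathcal{E}$ by the canonical order-$p$ subgroup in the kernel of reduction, and obtain uniqueness from the unbounded $p$-adic denominators of the formal logarithm (equivalently, the paper's lemma that $Df = c \in \hat{R}$ forces $c=0$). The one point to tighten is uniqueness: you need $w_{p^n-1}$ to be a unit for \emph{all} $n$ (the paper shows $w_{p^n-1}=H_n\equiv H_1^{1+p+\cdots+p^{n-1}} \bmod p$), since the single coefficient $w_{p-1}/p$ of $t^p$ does not rule out scalar multiples of $L$ by elements divisible by $p$.
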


\begin{theorem}\label{theorem2}  There is a unique Laurent series  $\zeta_{\cal E/\hat{R}}(t)$ in  $1/t+\hat{R}[[t]]$, odd under $t\rightarrow -t$,
such that $D(\zeta_{\cal E/\hat{R}}(t))+x(t)$ is some element $\beta \in \hat{R}$. 
We call $\zeta_{\cal E/\hat{R}}(t)$
the universal $p$-adic Weierstrass zeta function.
\end{theorem}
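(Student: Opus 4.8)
The plan is to solve the differential equation explicitly and to reduce both existence and uniqueness to $p$-adic congruences among the coefficients of $x(t)W(t)$. Writing $D=\tfrac{1}{W(t)}\tfrac{d}{dt}$, the condition $D(\zeta_{\mathcal{E}/\hat{R}}(t))+x(t)=\beta$ is equivalent to $\zeta'(t)=W(t)\bigl(\beta-x(t)\bigr)$. Since the involution $y\mapsto-y$ sends $t=-x/y$ to $-t$ while sending $\omega=dx/2y$ to $-\omega$, both $x(t)$ and $W(t)=\omega/dt$ are even; hence $W(t)\bigl(\beta-x(t)\bigr)$ has no $t^{-1}$ term and integrates formally to an odd Laurent series with no logarithmic obstruction. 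Writing $x(t)W(t)=\sum_{n}e_{n}t^{n}$ (with $e_{-2}=1$) and $W(t)=\sum_{n}w_{n}t^{n}$ (with $w_{0}=1$), the only candidate is $\zeta(t)=\tfrac{1}{t}+\sum_{k\ge1}c_{k}t^{2k-1}$ with $c_{k}=\tfrac{\beta w_{2k-2}-e_{2k-2}}{2k-1}$. The lead term $1/t$ and the oddness are then automatic, so the entire theorem reduces to choosing the single constant $\beta\in\hat{R}$ that forces every $c_{k}$ into $\hat{R}$.

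First I would discard the indices with $p\nmid(2k-1)$: there $2k-1\in\mathbb{Z}_{p}^{\times}\subseteq\hat{R}^{\times}$ and $c_{k}\in\hat{R}$ for any $\beta$. The content lies in the indices with $p\mid(2k-1)$, where I need $\beta w_{2k-2}\equiv e_{2k-2}\pmod{p^{\,v_{p}(2k-1)}}$. Here inverting $H$ is decisive: using that $w_{p-1}$ reduces modulo $p$ to a unit multiple of the Hasse invariant, I would show that all the $w_{2k-2}$ arising for $p\mid(2k-1)$ are $p$-adic units, so each such congruence pins down $\beta$ modulo a power of $p$. Following the strategy announced in the Introduction, I would then build $\beta$, and with it $\zeta_{\mathcal{E}/\hat{R}}(t)$, as the $p$-adic limit of approximate solutions constructed modulo $p^{n}$, with the level $2k-1=p$ fixing $\beta\bmod p$ and the higher levels refining it.

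For uniqueness, suppose $\zeta$ and $\tilde{\zeta}$ solve the equation with constants $\beta$ and $\tilde{\beta}$. Their difference $\eta=\zeta-\tilde{\zeta}$ is an odd element of $\hat{R}[[t]]$ with $D\eta=\gamma$, where $\gamma=\beta-\tilde{\beta}$, so $\eta=\gamma\,\ell(t)$ with $\ell(t)=\int W(t)\,dt=t+\sum_{k\ge1}\tfrac{w_{2k}}{2k+1}t^{2k+1}$ the formal logarithm. Because the coefficients $\tfrac{w_{p^{a}-1}}{p^{a}}$ of $\ell(t)$ have $p$-valuation tending to $-\infty$ (once more by ordinarity, the $w_{p^{a}-1}$ being units), the membership $\gamma\,\ell(t)\in\hat{R}[[t]]$ forces $\gamma=0$; hence $\eta=0$ and both $\beta$ and $\zeta_{\mathcal{E}/\hat{R}}(t)$ are unique.

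The main obstacle is the consistency of the congruence system: the single $\beta$ must satisfy $\beta w_{pm-1}\equiv e_{pm-1}\pmod{p}$ for all odd $m$ coprime to $p$, which forces the ratios $e_{pm-1}/w_{pm-1}$ to agree modulo $p$, together with their refinements modulo higher powers of $p$. These are precisely the congruences among the coefficients of $x(t)\omega(t)/dt$ advertised in the Introduction, and I expect to prove them in Hasse's style: interpret solvability modulo $p$ as the vanishing of the Cartier operator on $(\beta-x)\,\omega$, use that the Cartier operator is invertible on invariant differentials of an ordinary curve (this is where $1/H$ enters), and pass to higher levels by comparing $\mathcal{E}$ with the quotient $\mathcal{E}'$ by the $p$-torsion in the kernel of reduction, the same functional-equation input exploited afterwards for Theorem \ref{theorem1}. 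Showing that the approximate constants $\beta$ thereby form a Cauchy sequence in $\hat{R}$ is the crux of the argument.
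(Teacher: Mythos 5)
Your reduction of the theorem to a statement about the single constant $\beta$ is correct and is in the spirit of the paper: writing $x(t)W(t)=\sum_n e_n t^n$, existence and uniqueness of $\zeta_{\mathcal{E}/\hat{R}}$ are indeed equivalent to existence and uniqueness of $\beta\in\hat{R}$ with $(2k-1)\mid(\beta w_{2k-2}-e_{2k-2})$ in $\hat{R}$ for all $k$. Your uniqueness argument via the formal logarithm is also essentially sound, granted the fact (which itself requires proof in this universal setting) that $w_{p^a-1}$ is a unit in $\hat{R}$ for every $a$; the paper obtains this from the residue theorem applied to $z_n\omega$, which gives $w_{p^n-1}=H_n$, together with $H_n\equiv H^{1+p+\cdots+p^{n-1}}\bmod p$.

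The existence half, however, has a genuine gap, and the one concrete lemma you propose in order to close it is false. You claim that all the $w_{2k-2}$ with $p\mid(2k-1)$ are $p$-adic units, so that each congruence $\beta w_{2k-2}\equiv e_{2k-2}$ pins down $\beta$. Since $w_n$ is isobaric of weight $n$ in $A_4,A_6$ (of weights $4,6$), this already fails at the level $2k-1=3p$: for $p=5$ one has $2k-2=14$, and $w_{14}$ is an integer multiple of $A_4^2A_6$, which is not a unit in $\hat{R}$ (a unit must reduce mod $p$ to a unit of $\mathbb{F}_5[A_4,A_6][1/\bar{H}]=\mathbb{F}_5[A_4,A_6][1/A_4]$, and no multiple of $A_4^2A_6$ is such a unit). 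Only the prime-power levels $2k-1=p^a$ reliably produce unit coefficients $w_{p^a-1}$ and hence determine $\beta$ modulo $p^a$; at the composite levels $2k-1=p^am$ with $m>1$, the congruence $\beta w_{p^am-1}\equiv e_{p^am-1}\pmod{p^a}$ is not automatic and is not a definition --- it is a constraint that must be verified for the $\beta$ already fixed by the prime-power levels. You acknowledge exactly this as ``the crux'' but only sketch a strategy (Cartier operator mod $p$, then passage to $\mathcal{E}'$) without executing it, and the passage to higher powers of $p$ is precisely where the sketch is thinnest; note that the paper never needs $\mathcal{E}'$ for Theorem \ref{theorem2} at all (the quotient curve enters only for Theorem \ref{theorem1}). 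What the paper does instead is manufacture all the congruences modulo $p^n$ at once from a global function: by Riemann--Roch there is a unique odd $z_n\in L(p^nO)$ with expansion $t^{-p^n}-H_n/t-J_nt+O(t^3)$; since $D(t^{-p^n})\equiv 0 \pmod {p^n}$, the reduction of $Dz_n$ modulo $p^n$ is a function on $\tilde{E}_n$ with only a double pole at the origin, hence equals $H_nx-J_n$ there, so $\zeta_n=H_n^{-1}(t^{-p^n}-z_n)$ satisfies $D\zeta_n\equiv -x+\beta_n\pmod{p^n}$ with $\beta_n=J_n/H_n$, with no case analysis on $k$; the coherence $\beta_{n+1}\equiv\beta_n\pmod{p^n}$ then follows from the coefficient-extraction argument of Lemma \ref{congruence}. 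Until you supply a proof of the consistency of your congruence system at the composite levels, your argument establishes uniqueness but not existence.
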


Given  $\sigma_{\cal E/\hat{R}}(t)$ it follows that $D(\sigma_{\cal E/\hat{R}}(t))/\sigma_{\cal E/\hat{R}}(t)
=\zeta_{\cal E/\hat{R}}(t)$, which is the order of
construction done by
Mazur and Tate. We will reverse the order by first constructing $\zeta_{\cal E/\hat{R}}(t),$
and then showing there is a unique $\sigma_{\cal E/\hat{R}}(t)\in \hat{R}[[t]]$, odd under $t\rightarrow -t$ and having lead term $t$, such that 
$D(\sigma_{\cal E/\hat{R}}(t))/\sigma_{\cal E/\hat{R}}(t)=\zeta_{\cal E/\hat{R}}(t)$.

The following is now formal since the expansions in (\ref{tExpand}) hold over any specialization of $\hat{R}$.

\begin{corollary}\label{sigma-specialization} Let $p>3$. If $X$ is a WOGEC over a $p$-complete ring $A$ given by a model as in (\ref{SpeEqn}), and $\rho$ is a continuous ring homomorphism from $\hat{R}$ to $A$
such that $\rho(A_4)=a_4$ and $\rho(A_6)=a_6$ (so $X$ is a $\rho$-specialization of $\cal E$), then
letting $\rho$ act on coefficients of Laurent series, and setting
$\sigma_{X/A}(t)=\rho(\sigma_{\cal E/\hat{R}})(t),~\zeta_{X/A}(t)=\rho(\zeta_{\cal E/\hat{R}})(t),$
we have
$D(\sigma_{X/A}(t))/\sigma_{X/A}(t)=\zeta_{X/A}(t),$
and
$D(\zeta_{X/A}(t))+x(t)=\rho(\beta),$ where $D$ acts on $A((t))$ as in (\ref{Daction}).
\end{corollary}


\begin{remark}
It may be helpful to explain the role of some of the hypotheses that go into Theorem \ref{theorem1}.
Suppose $A$ is a $p$-complete DVR of characteristic $0$, and $X$ is an elliptic curve over $A$ with ordinary or multiplicative reduction, given by a model of the form (\ref{SpeEqn}).

If $a$ is any element in the fraction field $F$ of $A$, then there is a unique odd power series $\sigma_a(t)$ in $F[[t]]$ with lead term $t$ such that
$D\left(\frac{D \sigma_a(t)}{\sigma_a(t)} \right) = -x+a$.
However, there is a unique $\beta$ (which necessarily lies in $A$) such that $\sigma_\beta(t)$ has coefficients in $A$, or even has coefficients which have bounded powers of $p$ in their denominators. (If instead $X$ had supersingular reduction, then no $\sigma_a(t)$ could have $p$-bounded coefficients. See \cite{BKY} for a discussion, especially in the supersingular case.)

Furthermore, if we relax the requirement that $\sigma_a$ be odd, for a given $a$ in $F,$ we can consider the full set of $\theta_a(t)$ in $F[[t]]$ with lead term $t$ such that
$D\left(\frac{D \theta_a(t)}{\theta_a(t)} \right) = -x+a$. But still only the $\theta_\beta(t)$ (for the same $\beta$ in $A$ as above) can have coefficients with $p$-bounded denominators. In the case that $X$ has multiplicative reduction, the $p$-adic theta functions of Tate \cite{T} give a family of such examples.
\end{remark}



\section{The constructions}

We will now carry out the constructions of $\zeta_{\cal E/\hat{R}}(t)$ and $\sigma_{\cal E/\hat{R}}(t)$, and show that they satisfy Theorem \ref{theorem2} and Theorem \ref{theorem1}, respectively.



\begin{center}\item\subsection{The construction of  the universal $p$-adic Weierstrass $\zeta$-function}\end{center}

Let $k$ be the fraction field of $\hat{R}/p\hat R$. Let $E$ be the basechange of $\cal E$ to $\hat K$ which is elliptic, and let $\mathcal E_p$ be the reduction of $\mathcal E$ over $\hat R/p\hat R,$ which is elliptic over $k$ since $\Delta$ does not vanish identically as a polynomial modulo $p$. 
%
Recall $p>3.$ 

\begin{proposition}\label{betaprop}  For any divisor $\cal D$ 
on $E/\hat K$, we standardly let $L(\cal D)$ denote the
$\hat K$-vector space of functions $f$ on $E/\hat K$ such that $(f)+\cal D$ is effective or $f=0$. Let $O$ denote the
origin on $E$.

a) For any $m\geq 2$, there is a unique element $\alpha_m$ of $L(mO)\cap \hat R[x,y]$ whose expansion in $t$ at the origin is
of the form $$t^{-m}+r_m/t+s_m t +t^3 u_m(t),$$ for some $u_m(t)\in \hat R[[t]]$,
where $r_m$ and $s_m$ are some   polynomials in $\hat R$ of weights $m-1$ and $m+1$, respectively. Its uniqueness makes $\alpha_m$ odd
if $m$ odd and even if $m$ even.

b) For any $n\geq 1$, there is a unique element $z_n$ of $L(p^nO)\cap \hat R[x,y]$ whose expansion in $t$ at the origin is 
of the form $$t^{-p^n}-H_n/t-J_n t +t^3 v_n(t),$$ for some $v_n(t)\in \hat R[[t]]$,
where $H_n$ and $J_n$ are some   polynomials in $\hat R$ of weights $p^n-1$ and $p^n+1$, respectively. Its uniqueness makes $z_n$ odd.

c) $H_{n}$ is a unit in $\hat{R}$, and $\zeta_n :=H_n^{-1}(t^{-p^n}-z_{n})$ lies in $\frac{1}{t}+ t\hat{R}[[t]]$.

d) Let $\beta_n=J_n/H_n \in \hat{R}$. Then $D\zeta_n\equiv -x+\beta_n\mod{p^n}$.
\end{proposition}

\begin{remark}\label{hasse}  
Then as noted in the Introduction, the reduction of $z_1$ modulo $p$ was studied by Hasse in his seminal paper where he introduced what is now called the Hasse invariant, one of whose incarnations is $H_1$ modulo $p$ \cite{Has}. That this agrees with what we are calling $H$ modulo $p$ is due to Deuring \cite{Deu}. For our purposes the chief take-away from this equality is that $H_1$ is invertible in $\hat{R}.$
We will need later that the coefficient of $x^{p(p-1)/2}$ in the $p$-division polynomial attached to $E$ is another element in $\hat{R}$ that reduces modulo $p$ to the Hasse invariant of $ \mathcal E_p$ \cite{Der}.
We will also use in Remark \ref{modular}
that
the polynomial in $A_4$ and $A_6$ which gives the Eisenstein series $\mathbb E_{p-1}$
when $A_4$ and $A_6$ are considered as modular forms, reduces mod $p$ to the Hasse invariant of $ \mathcal E_p$.

\end{remark}

\begin{proof} (a) We will proceed by induction. For $m \geq 2$, let $\tilde{\alpha}_m$ be $x^{\frac{m}{2}}$ when $m$ is even and $-yx^{\frac{m-3}{2}}$ when $m$ is odd.
Then $\tilde{\alpha}_m$ is an element of $L(mO)\cap \hat R[x,y]$ and from (\ref{tExpand}), we find $\tilde{\alpha}_m$ is in $R((t))$ with lead term $\frac{1}{t^m}$.
Again by (\ref{tExpand}), we can set $\alpha_2 = x$.
For $m\geq 3$, we will now recursively define $\alpha_m\in L(mO)\cap \hat R[x,y]$. Writing
$\tilde{\alpha}_m = \frac{1}{t^m} + \frac{a_{-(m-1)}}{t^{m-1}} + \cdots + \frac{a_{-2}}{t^2} + \frac{a_{-1}}{t} + a_0 + a_1t+ \cdots$,
for some $a_\ell$, $\ell \geq -(m-1),$ in $\hat R$,
we now set $\alpha_m$ to be $\tilde{\alpha}_m-\sum_{\ell=2}^{m-1}a_{-\ell} \alpha_\ell-a_0$. 
Then since the $a_\ell$ are in $\hat R$, we have that $\alpha_m$ is in $\hat R[x,y]\cap L(mO),$ 
and by design
$$\alpha_m = \frac{1}{t^m}+\frac{r_m}{t}+0 + s_mt+\cdots,$$ 
for some $r_m, s_m \in \hat R$.


If $\alpha'_m$ were another function in $L(mO)$ whose expansion in $t$
was of the form $\frac{1}{t^m}+\frac{r'_m}{t}+0 + s'_mt+\cdots,$
then $\alpha_m-\alpha'_m\in L(O)$, so would be a constant. That forces $r_m=r'_m,$
whence $\alpha_m-\alpha'_m$ vanishes at the origin, so is $0$. 

In particular, $\alpha'_m=(-1)^m[-1]^*\alpha_m$ is of this form, so  $(-1)^m[-1]^*\alpha_m=\alpha_m$, hence
$\alpha_m$ is even when $m$ is even and odd when $m$ odd.

To compute weights, we can use that the weight of $t$ is $-1$ to evaluate $gr_\kappa(\alpha_m)$ for any $\kappa$ in $\Z_p^\times$ of infinite-order, and find that
\begin{align*}
gr_\kappa(\alpha_m)
&=\frac{1}{gr_\kappa(t)^m}+\frac{gr_\kappa(r_m)}{gr_\kappa(t)}+0 + gr_\kappa(s_m)gr_\kappa(t)+\cdots \\
&=\frac{\kappa^m}{t^m}+\frac{gr_\kappa(r_m)\kappa}{t}+0 + \frac{gr_\kappa(s_m)}{\kappa}t+\cdots.
\end{align*}
By the uniqueness of $\alpha_m$, we have that $\alpha_m=\kappa^{-m} gr_\kappa(\alpha_m)$. So $gr_\kappa(r_m)=\kappa^{m-1}r_m$ and $gr_\kappa(s_m)=\kappa^{m+1}s_m$.
Hence $\alpha_m$ is   of weight $m$, $r_m$ is of weight $m-1$, and $s_m$ is of weight $m+1$.

(b) This follows from (a) by taking $z_n = \alpha_{p^n}$, $H_n = -r_{p^n}$, $J_n = - s_{p^n}$, and noting that $z_n$ is odd since $p^n$ is.


(c) Since $z_n\in \hat R[x,y]$, it reduces modulo $p$ to a function $\bar{z}_n$ in $k[x,y]$,
which therefore has poles only at the origin on $ \mathcal E_p$, and an expansion there of the form
  $$t^{-p^n}-\gamma/t-\delta t+ ...,\gamma,\delta\in \hat R/p\hat R,$$
  where $\gamma=\bar{H}_n$ and $\delta=\bar{J}_n$, and
  a bar denotes reduction of elements in $\hat R$ modulo $p$.
  By the argument in (a), {\it mutatis mutandi}, $\bar{z}_n$
  is the unique such function in $k[x,y]$ with an expansion of this form. 
In particular, 
$$\bar{z}_{n}= (\bar{z}_{n-1})^p+\bar{H}_{n-1}^p\bar{z}_1,$$
hence $\bar{H}_{n}=\bar{H}_{n-1}^p\bar{H}_{1}$,
which gives recursively that $\bar{H}_{n} = \bar{H_1}^{1+p+\cdots +p^{n-1}}.$ As explained in Remark \ref{hasse}, $\bar{H}_1=\bar{H}$.  
Hence $H_n$ is invertible in $\hat{R}$ for all $n\geq 1$, and
it makes sense to define
$\zeta_n={H_n}^{-1}(t^{-p^n}-z_n)$ in $\frac{1}{t}+ t\hat{R}[[t]]$. 

(d) By (\ref{tExpand}), $W(t)^{-1}\in \hat R[[t]]$ and is $1\mod{t^4}$.
From (c), $-H_n^{-1}z_n\in \hat{R}[x,y]$, so $D(-H_n^{-1}z_n)$ is some polynomial $g(x,y)$ with coefficients in $\hat{R}$. Note that by the definition of $D$, the expansion at the origin of $g(x,y)$ is $-H_n^{-1}{{dz_n}\over {dt}}W(t)^{-1}$, which is 
$$ -H_n^{-1}
({{-p^n}\over{t^{p^n+1}}}+{{H_n}\over{t^2}}-J_n)\mod{t^2},$$
so 
$$g(x,y)=-1/t^2+\beta_n\mod{(p^n,t^2)},$$
where $\beta_n=J_n/H_n\in \hat{R}$.
Hence by (\ref{tExpand}), $g(x,y)=-x+\beta_n\mod{p^n}$. Since 
$D(t^{p^n})\equiv 0\mod{p^n}$, we get that $D\zeta_n\equiv -x+\beta_n\mod{p^n}$.

\end{proof}

\begin{lemma}\label{congruence} Let $g(t)\in \hat{R}((t))$ be a Laurent series such that $D(g(t))\equiv c_n~\mod{p^n}$ for
some $n\geq 1$ and some $c_n\in \hat{R}$. Then $c_n\equiv 0\mod{p^n}$.

Hence if $D(g(t))=c$ for some $c\in \hat{R}$, then $c=0$ and $g(t)$ is a constant.
\end{lemma}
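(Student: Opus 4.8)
The plan is to rewrite the hypothesis as a statement about ordinary power-series coefficients and then isolate $c_n$ by inspecting a single, carefully chosen coefficient. Since $D(f(t)) = (df/dt)/W(t)$ with $W(t)\in 1+t\hat{R}[[t]]$ a unit in $\hat{R}[[t]]$, the congruence $D(f(t))\equiv c_n \pmod{p^n}$ is equivalent to $f'(t)\equiv c_n W(t)\pmod{p^n}$, where $f'(t)=df/dt$. Writing $f(t)=\sum_m a_m t^m$ with only finitely many nonzero $a_m$ for $m<0$, comparison of the coefficients of $t^k$ on the two sides gives $(k+1)a_{k+1}\equiv c_n w_k \pmod{p^n}$ for every $k$.

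First I would specialize to $k=p^n-1$. Then the left-hand side is $p^n a_{p^n}$, which is $\equiv 0\pmod{p^n}$, so the congruence collapses to $c_n w_{p^n-1}\equiv 0\pmod{p^n}$. Everything therefore reduces to showing that $w_{p^n-1}$ is a unit in $\hat{R}$: granting this, multiplication by its inverse yields $c_n\equiv 0\pmod{p^n}$, which is the assertion.

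The main obstacle, and the step carrying the real content, is the identity $w_{p^n-1}=H_n$. I would prove it by the residue theorem on $E/k$. The function $z_n\in L(p^nO)$ has its only pole at $O$, and $\omega$ is holomorphic, so $z_n\omega$ is a meromorphic differential whose only pole is at $O$; hence $\Res_O(z_n\omega)=0$. Computing this residue from the $t$-expansion $z_n\omega/dt=z_nW$, and using that the expansion of $z_n$ in Proposition \ref{betaprop}(a) has leading term $t^{-p^n}$, residue term $-H_n/t$, and vanishing intermediate polar coefficients, the only contributions to the coefficient of $t^{-1}$ are $w_{p^n-1}$ (from $t^{-p^n}\cdot w_{p^n-1}t^{p^n-1}$) and $-H_n$ (from $-H_n t^{-1}\cdot w_0$). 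Thus $w_{p^n-1}-H_n=0$. Since $H_n$ is a unit in $\hat{R}$ by Proposition \ref{betaprop}(b), so is $w_{p^n-1}$, completing the proof of the congruence; the case $n=1$ incidentally recovers the identity $H_1=w_{p-1}$ used in the remark.

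Finally, the ``hence'' is immediate: if $D(f(t))=c$ with $c\in\hat{R}$, then $D(f(t))\equiv c\pmod{p^n}$ for every $n$, so $c\in\bigcap_n p^n\hat{R}=0$ by the $p$-adic separatedness of $\hat{R}$.
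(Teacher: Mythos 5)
Your proof is correct and follows essentially the same route as the paper's own argument: rewriting the congruence as $df/dt \equiv c_n W(t) \pmod{p^n}$, isolating the coefficient of $t^{p^n-1}$ to obtain $c_n w_{p^n-1}\equiv 0 \pmod{p^n}$, and applying the residue theorem to $z_n\omega$ to identify $w_{p^n-1}=H_n$ as a unit in $\hat{R}$. The only difference is expository: you spell out the residue computation and the final separatedness step ($\bigcap_n p^n\hat{R}=0$) in more detail than the paper does.
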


\begin{proof} We can rewrite the condition $D(g(t))\equiv c_n~\mod{p^n}$ as ${{dg}\over{dt}}\equiv c_n
W(t)\mod{p^n}$. Since the coefficient of $t^{p^n-1}$ in ${{dg}\over{dt}}$
vanishes mod $p^n$, we have  $c_nw_{p^n-1}\equiv 0\mod{p^n}$. Note that the sum of the residues of $z_n \omega $ is 0, and since the only pole is at the origin, its residue there must vanish. The residue
 is $w_{p^n-1}-H_n,$ so $w_{p^n-1}=H_n$
is invertible mod $p$. Hence $c_n\equiv 0 \mod{p^n}$. (n.b.: That $w_{p-1}\equiv H_1\mod{p}$ is in \cite{Has}.)
\end{proof}

\medskip
\noindent
{\it Proof of Theorem \ref{theorem2}:} Applying Proposition \ref{betaprop} to $\zeta_n$ and $\zeta_{n+1}$ shows that $\beta_{n+1}\equiv \beta_n\mod{p^n}$, so we can set $\beta=\lim_{n\rightarrow\infty} \beta_n\in \hat{R}$.
Proposition \ref{betaprop}(d) shows that for every $m\geq 1$, $(-x(t)+\beta)W(t)$ is the derivative with respect to $t$
of a Laurent series in $t$ over $\hat{R}/p^m\hat R$, and hence
for every $n\geq 1$, the coefficient of $t^{pn-1}$ in the expansion of $(-x(t)+\beta)W(t)$ vanishes mod $pn$.
Therefore there is a Laurent series $\zeta(t) \in 1/t + \hat{R}[[t]]$ with the property that
$D(\zeta(t))=-x(t)+\beta$, and then by Lemma \ref{congruence} we can make $\zeta(t)$ unique by specifying that it is odd in $t$ (in fact, it is then given by the term-by-term limits of the $\zeta_n$). 
Hence $\zeta(t)$ is the unique choice for $\zeta_{\cal E/\hat{R}}(t)$.

\begin{center}\item\subsection{The construction of  the universal $p$-adic $\sigma$-function}\end{center}
To remove the polar term, we define $\tilde{\zeta}_{\cal E/\hat{R}}(t)\in \hat{R}[[t]]$ as
$$\tilde{\zeta}_{\cal E/\hat{R}}(t)=\zeta_{\cal E/\hat{R}}(t)-Dt/t=\zeta_{\cal E/\hat{R}}(t)-{1\over t}{{dt}\over{\omega}}=\zeta_{\cal E/\hat{R}}(t)-{1\over{tW(t)}}.$$
Let $\log(1+t)=\sum_{n\geq 1}(-1)^{n+1}{{t^n}\over n}$, and $\exp(t)=\sum_{n\geq 0} {{t^n}\over {n!}}$, so
we have in $\mathbb Q[[t]]$ that $\exp(\log(1+t))=1+t$, and $\log(\exp(t))=t$.
We let $\Lambda(t)$ denote the integral with respect to $t$ of $\tilde{\zeta}_{\cal E/\hat{R}}(t)W(t)$ in $\hat{R}[[t]]\otimes \mathbb Q$
that has no constant term, which will just write as 
\[\Lambda(t) =\int \tilde{\zeta}_{\cal E/\hat{R}}(t)\omega\]
which is even in $t$.

\begin{definition}\label{definition-of-sigma}
We define $\tilde{\sigma}(t)=\exp(\Lambda(t))$, as a power series in $t$
with coefficients in $\hat{R}\otimes \mathbb Q$.
We then set $\sigma(t)=t \tilde{\sigma}(t)$.
\end{definition}

Note that $\tilde{\sigma}(t)$ is an even power series in $t$ whose constant term is 1.
Hence $\sigma(t)$ is an odd power series in $t$
with lead term $t$, and by construction, $\zeta_{\cal E/\hat{R}}(t)=D(\sigma(t))/\sigma(t)$. 

Our goal is to show that $\sigma(t)$ (or equivalently $\tilde{\sigma}(t)$) has coefficients in $\hat{R}$, so that
$\sigma(t)$ will be the $\sigma_{\cal E/\hat{R}}(t)$ promised in Theorem \ref{theorem1}.
We will do this by using Hazewinkel's  functional equation lemma adapted to our situation (see \cite{Haz}, Chpt 1, Sect. 2; see also \cite{Ho} Lemma 2.4.)
We standardly let $\mathbb Z_{(p)}$ denote the ring of fractions $(\mathbb Z-p\mathbb Z)^{-1}\mathbb Z.$

\begin{lemma}[Functional Equation Lemma]\label{FunEqLem} 
Let $B$ be a $\mathbb{Z}_{(p)}$-algebra and $\alpha: B \rightarrow B$ be an
injective homomorphism such that for all $r\in B$, $\alpha(r)\equiv r^p\mod{p}$. Suppose $B$ is an integral domain and let $F$ be its field of fractions. Let $s$ be an indeterminate.
Extend $\alpha$ to $F$ and then to $F[[s]]$
by acting on the coefficients of power series.  Let  $a, b\in sF[[s]]$ be such that $a(s)-{1\over p}\alpha(a)(s^p)\in B[[s]]$ and $b(s)-{1\over p}\alpha(b)(s^p)\in B[[s]].$
Then $b^{-1}(a(s))\in B[[s]]$, where $b^{-1}$ denotes the power series in $F[[s]]$ such that $b^{-1}(b(s))=b(b^{-1}(s))=s$.  
\end{lemma}


\begin{proof}
This follows from the Functional Equation-Integrality Lemma in Section 1.2.2 of \cite{Haz}. In the notation therein, take $A=B,$ $K=F,$ $\sigma=\alpha,$ {\LARGE $a~$}$ = pB,$ $q=p,$ $s_1=p^{-1}$, and $s_2=s_3=\cdots = 0$.
Then if we let $g=b(s)-{1\over p}\alpha(b)(s^p)$
and $\overline{g}=a(s)-{1\over p}\alpha(a)(s^p),$
it can be shown that
$f_g=b(s)$ and $f_{\overline{g}}=a(s),$
in which case the result follows from \cite{Haz} I.2.2(ii).
\end{proof}


\begin{corollary}\label{FunEqCor} With $F$ and $\alpha$ as in Lemma \ref{FunEqLem}:

a) For any $a(s)\in sF[[s]]$ satisfying
$$a(s)-{1\over p}\alpha(a)(s^p)\in B[[s]],$$
we have that $\exp(a(s))$ has coefficients in $B$.

b) Suppose that $a \in s F[[s]]$ is such that $da/ds$ is in $B[[s]]$, so $a=\sum_{n=1}^\infty {{c_{n-1}}\over{n}}s^{n}$ for some $c_n\in B$. Then $a(s)-{1\over p}\alpha(a)(s^p)\in B[[s]],$ if and only 
if for all $n\geq 1$,
     $$c_{np-1}\equiv \alpha(c_{n-1})\mod{pn}.$$

c) Suppose $s' \in  s B[[s]]$ satisfies $s'\equiv s^p\mod{p}$.  If $a=\sum_{n=1}^\infty {{c_{n-1}}\over{n}}s^{n}$ for some $c_n\in B$, and if $a(s)-{1\over p}\alpha(a)(s')$ has coefficients in $B$, then 
$\exp{(a(s))}$ has coefficients in $B$.
\end{corollary}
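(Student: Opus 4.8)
The plan is to derive all three parts from the Functional Equation Lemma, with part (a) carrying the real content and parts (b) and (c) reducing to it. I will use throughout that $\alpha$ fixes $\Q$ (being a ring homomorphism extended to $K$), so it commutes with division by an integer, and that $\Z_{(p)}\subseteq\hat R$; the latter follows from $1/6\in\hat R$ together with $p$-completeness (so $\Z_p\subseteq\hat R$), and in particular every integer prime to $p$ is a unit in $\hat R$.

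For (a) I would apply the lemma to the given $a$ and to $b(t)=\log(1+t)$, whose compositional inverse is $b^{-1}(t)=\exp(t)-1$, so that $b^{-1}(a(t))=\exp(a(t))-1$. The coefficients of $b$ are rational, hence fixed by $\alpha$, so $b(t)-\tfrac1p\alpha(b)(t^p)=\log(1+t)-\tfrac1p\log(1+t^p)$; comparing coefficients, the coefficient of $t^m$ is $(-1)^{m+1}/m\in\Z_{(p)}$ when $p\nmid m$ and is $0$ when $p\mid m$ (here $p$ odd gives $(-1)^{pm+1}=(-1)^{m+1}$). Thus $b$ satisfies the hypothesis of the lemma, and since $a$ does by assumption, the lemma yields $\exp(a(t))-1\in\hat R[[t]]$, whence $\exp(a(t))\in\hat R[[t]]$.

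For (b) I would expand $a(t)-\tfrac1p\alpha(a)(t^p)=\sum_{m\ge1}\tfrac{c_{m-1}}{m}t^m-\sum_{n\ge1}\tfrac{\alpha(c_{n-1})}{pn}t^{pn}$ and inspect coefficients. When $p\nmid m$ the coefficient $c_{m-1}/m$ already lies in $\hat R$ because $m$ is a unit there, so no condition arises; the term $t^{pn}$ has coefficient $(c_{pn-1}-\alpha(c_{n-1}))/(pn)$, which lies in $\hat R$ exactly when $c_{pn-1}\equiv\alpha(c_{n-1})\bmod pn$. This is the asserted equivalence.

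For (c) I would reduce to (a) by comparing the substitutions $t'$ and $t^p$. Put $F=\alpha(a)$ and $\delta=t'-t^p$; the hypothesis $t'\equiv t^p\bmod p$ gives $\delta\in pt\hat R[[t]]$, and the formal Taylor expansion of $F$ about $t^p$ gives
$$a(t)-\tfrac1p F(t^p)=\Big(a(t)-\tfrac1p F(t')\Big)+\sum_{k\ge1}\frac{F^{(k)}(t^p)}{p\,k!}\,\delta^k .$$
Because $a=\sum_{n\ge1}\tfrac{c_{n-1}}{n}t^n$ with $c_n\in\hat R$, every derivative $F^{(k)}$ with $k\ge1$ has coefficients in $\hat R$, while $\delta^k\in p^k\hat R[[t]]$; the remaining factor $p^{k-1}/k!$ is $p$-integral since $v_p(k!)\le(k-1)/(p-1)\le k-1$. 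Hence each summand lies in $\hat R[[t]]$, the sum converges $t$-adically as $\delta$ has positive order, and the first bracket is integral by hypothesis, so $a(t)-\tfrac1p\alpha(a)(t^p)\in\hat R[[t]]$; part (a) then gives $\exp(a(t))\in\hat R[[t]]$. I expect the main obstacle to be exactly this estimate in (c): keeping the division by $k!$ under control so that the discrepancy between substituting $t'$ and $t^p$ remains integral after dividing by $p$.
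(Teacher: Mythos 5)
Your proposal is correct, and parts (a) and (b) are argued exactly as in the paper: (a) feeds $b(t)=\log(1+t)$ into the Functional Equation Lemma after checking $\log(1+t)-\tfrac1p\log(1+t^p)\in\hat{R}[[t]]$, and (b) is the same coefficient comparison at $t^{np}$ (the prime-to-$p$ coefficients being automatically integral). Where you genuinely diverge is (c). Both you and the paper make the same reduction --- show that swapping $t'$ for $t^p$ changes $a(t)-\tfrac1p\alpha(a)(\cdot)$ by an element of $\hat{R}[[t]]$, then invoke (a) --- but the mechanisms differ. The paper's device is the single congruence: if $t'\equiv t^p \bmod p$ then $(t')^n\equiv t^{pn} \bmod pn$ for all $n\geq 1$ (binomial theorem plus $\binom{n}{k}/n=\binom{n-1}{k-1}/k$ and $v_p(k)\le k-1$), so the discrepancy $\sum_n \frac{\alpha(c_{n-1})}{pn}\bigl((t')^n-t^{pn}\bigr)$ is term-by-term integral; this makes transparent why the hypothesis is stated in the shape of (b), with coefficients $c_{n-1}/n$. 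You instead Taylor-expand $F=\alpha(a)$ about $t^p$ with increment $\delta=t'-t^p\in pt\hat{R}[[t]]$, using that $F^{(k)}$ has $\hat{R}$-coefficients for $k\ge 1$ (again because the coefficients of $a$ are $c_{n-1}/n$) together with the factorial estimate $v_p(k!)\le (k-1)/(p-1)\le k-1$. Your route is sound --- the formal Taylor identity is legitimate here since $\delta$ has positive $t$-order, so only finitely many terms contribute to each power of $t$ --- but it carries slightly more overhead: you need the global Taylor formula, the convergence remark, and the stronger estimate on $k!$, where the paper needs only the elementary estimate on $k$ applied monomial by monomial. What your version buys is a cleaner conceptual statement (the error in replacing $t'$ by $t^p$ is controlled by derivatives, uniformly in the series $a$), whereas the paper's version is leaner and ties (c) directly back to (b).
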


\begin{proof} (a) This follows from Lemma \ref{FunEqLem} since $\alpha$ is trivial on the prime subfield 
$\mathbb Q$ of $F$, $\log{(1+s)}-{1\over p}\log{(1+s^p)}$ is in $B[[s]]$, and the inverse of $\log(1+s)$ is $\exp(s)-1$.

(b) This follows from comparing coefficients of $s^{np}$. 

(c) This follows from  (a) and (b)
since if in $B[[s]]$, $s'\equiv s^p\mod{p}$, then for all $n\geq 1$, $(s')^n \equiv s^{pn}\mod{pn}$.
\end{proof}

Our goal now is to apply part (c) of this corollary to  $\Lambda(t)$ when $B=\hat{R}$, $F=\hat{K}$, and $s=t$, which requires finding a ring endomorphism $\alpha$ of $\hat{R}$ that reduces to the Frobenius mod $p$, finding a suitable $s'$, and verifying the requisite functional
equation for $\Lambda$, which we now do in turn. 




Let $X$ be an indeterminate. Since $\hat{R}$ is $p$-complete, \cite{Ell} gives us a version of the  Weierstrass Preparation Theorem for $\hat{R}[X]$. We call a power series $f=\sum_{i\geq 0} a_iX^i \in \hat{R}[[X]]$ $p$-distinguished of order $n\geq 0$ if $a_i\equiv 0\mod{p}$ for $i<n$ and $a_n$ is a unit in $\hat{R}$.
We call a monic polynomial $P\in \hat{R}[X]$ of degree $n$ a $p$-Weierstrass polynomial if $P\equiv X^n\mod{p}$.

\begin{lemma}\label{weierstrass}  (a) For any $g \in \hat{R}[[X]]$ that is $p$-distinguished of order $n\geq 0$, there exists a unique $p$-Weierstrass polynomial $P \in \hat{R}[X]$ of degree $n$ and a unique unit $U\in 
\hat{R}[[X]]$ such that $g = UP$.

(b) If in (a) we have $g\in\hat{R}[X]$, then in the factorization $g=UP,$ $U \in  \hat{R}[X]$.
\end{lemma}

\begin{proof}
Part (a) comes from \cite{Ell} Theorem 1.3, and (b) is from \cite{Ell} Lemma 3.5.
\end{proof}

For any positive integer $n$, let $E[n]$ denote the $n$-torsion of $E$ in an algebraic closure $\overline{\hat K}$ of $\hat{K}$.

\begin{proposition} Let $C\subset E[p]$ be the subset consisting of $O$ and the points of $E[p]$ whose $x$-coordinates are not integral over $\hat{R}$.
Then $C$ is a subgroup of order $p$ defined over $\hat{K}$, called the canonical subgroup of $E$.
\end{proposition}

\begin{proof} For any non-zero integer $n$ we let $\psi_n$ denote the $n^{th}$-division polynomial for $E$, which is characterized by its divisor being $\sum_{u\in E[n]}u-n^2O$ and by $t^{n^2-1}\psi_n|_{t=0}=n.$ Is is well-known (see \cite{Was} chapter 3 for (i) and (iii) and \cite{Cas} Theorem 1 for (ii)) that: 

i) $\psi_n$ is in $\hat R[x]$ for $n$ odd, and in $2y\hat R[x]$ for $n$ even, and that with our weights on $\hat R$, $x$ and $y$, each term of $\psi_n$ has weight $n^2-1$.

ii) $D(\psi_p)\equiv 0\mod{p}$.


iii) $\varphi_n:=([n]^*x)\psi_n^2$ is in $\hat R[x]$ and is monic of degree $n^2$.


In particular, since $p$ is odd, by (i) we have
$$\psi_p(x)=px^{(p^2-1)/2}+\sum_{n=0}^{(p^2-3)/2} \ell_n x^n,$$
for some $\ell_n$ an integer polynomial in $A_4$ and $A_6$ which is   of weight $p^2-1-2n$. By (ii), $\ell_n$  is a multiple of $p$ if $n$ is not a multiple of $p$.

Most important for us is $\ell_{p(p-1)/2},$ which is of weight $p-1$. In \cite{Der} it is shown to be congruent to $H$ mod $p$, so
%
%
%
is invertible in $\hat{R}$. 
Hence $g(x):=x^{(p^2-1)/2}\psi_p(1/x)$ is a polynomial whose lowest non-zero term modulo $p$ is $\ell_{p(p-1)/2}x^{(p-1)/2}$. So applying Lemma \ref{weierstrass} when $X=x$ to $g$ gives
that over $\hat{R}$, $g(x)=U(x)P(x)$ where $P(x)$ is a $p$-Weierstrass polynomial of degree $(p-1)/2$ (so in particular is monic), and $U(x)$ is of degree at most $(p^2-p)/2$. If we let $\tilde{\ell}_0$ denote the
constant term of $U(x)$, it is congruent to 
$\ell_{p(p-1)/2}$ modulo $p,$ so is a unit in $\hat{R}$. Setting $\pi(x)=\tilde{\ell}_0 x^{(p-1)/2}P(1/x)$,
$\xi(x)=\tilde{\ell}_0^{-1}x^{(p^2-p)/2}U(1/x),$ which are in $\hat{R}[x]$, we get that
$\psi_p(x)=x^{(p^2-1)/2}g(1/x)=\pi(x)\xi(x)$. Since by design $\xi(x)$ is monic, we get that
$\pi(x)$ is of the form
\begin{equation}\label{piFactor}\pi(x)=px^{(p-1)/2}+\tilde{\ell}_{{(p-3)}/2}x^{{p-3}\over 2}+ ....+\tilde{\ell_0},
\end{equation}
for some  $\tilde{\ell_i}\in \hat{R},$ $1\leq i\leq (p-3)/2.$
As a result, the $p^2-p$ points of $E[p]$ which are in the divisor of zeroes of $\xi(x)$ have $x$-coordinates which are integral over $\hat{R}.$ On the other hand $p$ is prime in $\hat{R}$, and (\ref{piFactor}) shows
that by Eisenstein's criterion, $P(x)$ --- and hence $\pi(x)$ --- is irreducible over $\hat{R}$. Therefore the $p-1$ points $\{P_1,...,P_{p-1}\}$ of $E[p]$ in the divisor of zeroes of $\pi(x)$ have $x$-coordinates which are not integral over $\hat{R}$. We set $C=\{O,P_1,...,P_{p-1}\}$. 

It remains to be shown that $C$ is a subgroup of $E[p]$. We can show it is a cyclic subgroup of order $p$ by verifying that for every integer $i$ prime to $p$, we have $[i]C\subseteq C$, where $[i]$ denotes the multiplication-by-$i$ map on $E$ . Note that $[i]$ induces an automorphism $[i]_p$ of $E[p]$, whose inverse is given by $[j]_p$ for any integer $j$ such that $ji\equiv 1\mod{p}.$ Note $C$ is closed under $[i]_p$ if and only if its complement $\tilde{C}=E[p]-C$ is closed under $[i]_p$, i.e., $\tilde{C}$ is closed under $[j]_p^{-1}.$ Note also that
$$[j]_p^{-1}\tilde{C}=\{u\in E[jp]|[j]u\in \tilde{C}\}\cap E[p],$$
so it suffices to show that for every $u\in E[jp]$ such that $[j]u\in \tilde{C}$, that the $x$-coordinate of $u$ is integral over $\hat{R}$.
This however follows because 
these $x(u)$ are precisely the zeroes of $\xi([j]^*x)=\xi(\varphi_j/\psi_j^2)$, which since $j$ is prime to $p$ are the roots of $\xi(\varphi_j/\psi_j^2)(\psi_j)^{p^2-p}$, which by (iii) is a monic polynomial over $\hat{R}$.

Finally, since $C$ consists of the origin and the divisor of zeroes of $\pi(x)$, it is defined over $\hat{K}.$
\end{proof}

Using the proposition, we now let $E'=E/C$, and
and let $\phi: E\rightarrow E'$ be the induced isogeny over $\hat{K}$.
We note that there is not a unique Weierstrass model for $E'$, but for each non-zero $\gamma\in \hat{K}$ there is a unique 
Weierstrass model  $$E'_\gamma: {y_\gamma}^2=f_\gamma(x_\gamma)={x_\gamma}^3+A_{4,\gamma}'x_\gamma+A_{6,\gamma}',$$
with $\omega_\gamma=dx_\gamma/2y_\gamma,$ determined by the condition that $\phi^*\omega_\gamma/\omega = \gamma $. We will use $\phi$ to identify the function field $\hat{K}(E')$ with
a subfield of $\hat{K}(E)$.

A main technical result of the paper is the following, which says that with care we can find models for $E'$ over $\hat{R},$ which define other WOGECs (n.b. \cite{Co1} Example 2.1.6), and which will provide us with the map $\alpha$ needed in our applications of the functional equation lemma. 

\begin{proposition}\label{moddingout} a) The model  $E'_p$ is of the form
\begin{equation}\label{Ep'Model}{y_p}^2=f_p(x_p)={x_p}^3+A_{4,p}'x_p+A_{6,p}',\end{equation}
where $A_{4,p}'$ and $A_{6,p}'$ are   in $\hat{R}$ of weights $4$ and $6$ respectively, 
and reduce respectively to $A_4^p/H^4$ and $A_6^p/H^6$ mod $p$. 
In addition, if $t_p=-x_p/y_p$, then $t_p\equiv H t^p\mod{p}$.

b) The model  $E'_{p/H}$ is of the form
$${y_{p/H}}^2={x_{p/H}}^3+A_{4,p/H}'x_{p/H}+A_{6,p/H}',$$
where $A_{4,p/H}'$ and $A_{6,p/H}'$ are   in $\hat{R}$ of weights $4p$ and $6p$ respectively, 
and reduce respectively to $A_4^p$ and $A_6^p$ mod $p$. 

c) Let $t_{p/H}=-x_{p/H}/y_{p/H}$. Then $t_{p/H}$ is in $\hat{R}[[t]],$ is odd in $t$,
and there is a power series $v(t)\in 1+pt\hat{R}[[t]]$ such that
$$t_{p/H}={1\over H}(t^p\pi(x))v(t),$$
where $\pi(x)$ is as in (\ref{piFactor}).
\end{proposition}

\begin{proof} a) Let (\ref{Ep'Model}) be the model for $E'_p$. We now want to verify the claims about 
$A_{4,p}', A_{6,p}'$ and $t_p$. By way of notation, for any point $u$ of $E,$ let $\tau_u$ denote the translation-by-$u$ map on $E$, and 
for any function $g$ in the function field $\hat{K}(E)$ of $E$ regular on $C$, let $N(g)=\prod_{u\in C}(\tau_u)^*(g)$ be the norm and let
\begin{equation}\label{N0Def}
N_0(g)=\prod_{u \in C-O} g(u)=(N(g)/g)(O).
\end{equation}
It follows from (\ref{piFactor}) that $N_0(x)=\tilde{\ell_0}^2/p^2$.

Now let $r_i, r_i'$, $1\leq i\leq 3$ be respectively roots of $f$ and $f_p'$ in an algebraic closure $\overline{\hat K}$ of $\hat K$,
so $e_i=(r_i,0)$ and $e_i'=(r_i',0)$ are non-trivial 2-torsion points on $E$ and $E'$ respectively.
Let $O_{E'}$ denote the origin on $E'.$ Then $C=\phi^{-1}(O_{E'}),$ and
since $p$ is odd, reordering the $e_i'$ if necessary, we  can assume $\phi^{-1}(e_i')=\tau_{e_i}(C).$
Then comparing divisors, there are constants $c_i$ in  $\overline{\hat{K}}$ 
such that 
$ x_p-r_i'=c_i^2N(x-r_i),$ and $y_p=\pm c_1c_2c_3N(y),$  so $x_p={1\over 3}\sum_{i=1}^3c_i^2N(x-r_i).$

The expansion of $x_p-r'_i$ in terms of $t$ has a lead term independent of $i$, which from (\ref{tExpand}) and (\ref{N0Def}) we see is
$c_i^2N_0(x-r_i)/t^2$. There is therefore a constant $c\in \overline{\hat{K}}$ such that for all $i$,
$c_i^2=c^2/N_0(x-r_i)$. 
Then $x_p$ has a lead term $c^2/t^2$, so by (\ref{Ep'Model}) and replacing $c$ by $-c$ if necessary, we can take
$y_p$ to have a lead term $-c^3/t^3$, and so if $t_p=-x_p/y_p$, then $t_p$ has lead term $t/c$.
Therefore $\phi^*(\omega_p)=\phi^*(dx_p/2y_p)=\omega/c$.
Hence by design,  $c=1/p$, so
\begin{equation}\label{xp}
x_p={1\over 3}\sum_{i=1}^3 N(x-r_i)/p^2N_0(x-r_i).\end{equation}
Note that $y_p$ has a lead term of $-1/p^3t^3$, and by the above, is a constant times $N(y)$ --- whose lead term by definition is $N_0(y)$ times the lead term $-1/t^3$ of $y$. So we have
\begin{equation}\label{yp}
y_p=N(y)/p^3N_0(y).\end{equation}
We now set out to calculate $N(x-r_i).$

We claim that for any $u\in C-O$, 
\begin{equation}\label{SymProdxri}
(\tau_u^*(x)-r_i)(\tau^*_{-u}(x)-r_i)=\bigg({{(x-r_i)(\tau_{e_i}(x)-x(u))}\over{x-x(u)}}\bigg)^2.
\end{equation}
Indeed the divisor of both sides of (\ref{SymProdxri}) 
is $2(u+e_i)+2(-u+e_i)-2u-2(-u),$
%
and both sides of (\ref{SymProdxri}) are $(x(u)-r_i)^2$ at the origin, so are the same.
An exercise with the group law on $E/\hat{K}$ shows
\begin{equation}\label{(x-ri)tau}
(x-r_i)\tau_{e_i}(x)=-(x+r_i)(x-r_i)+y^2/(x-r_i)=xr_i+r_jr_k+r_i^2,
\end{equation} where $\{i,j,k\}=\{1,2,3\}$,
which lies in $\hat{R}[r_1,r_2,r_3].$

%
%

Now taking the product of (\ref{SymProdxri})
over the cosets of the non-identity elements of $C$ under the action of $[\pm 1]$ and then
multiplying by $x-r_i$ gives that 
\begin{equation}\label{N(x-ri)Form1}
N(x-r_i)=(x-r_i)^p\bigg({{\pi(\tau_{e_i}(x))}\over{\pi(x)}}\bigg)^2.
\end{equation}
Since $\pi(\tau_{e_i}(x))\equiv \pi(x) \equiv \tilde{\ell_0}\mod{p},$ (\ref{N(x-ri)Form1})
gives that $N(x-r_i)$ reduces to $(x-r_i)^p$ mod $p$.

From (\ref{N0Def}) and (\ref{N(x-ri)Form1}) 
we also get that $p^2N_0(x-r_i)=p^2\left.{{N(x-r_i)}\over{x-r_i}}\right\vert_O=\pi(r_i)^2$, where $\pi(r_i)$ 
reduces to $\tilde{\ell_0}$ mod $p$, and is a unit in $\hat{R}[r_i]$, since $\pi(r_1)\pi(r_2)\pi(r_3)$ is in $\hat{R} $ and reduces to $\tilde{\ell_0}^3$ mod $p$.
If we rewrite
\begin{equation}\label{N(x-ri)Form2}
N(x-r_i)=(x-r_i)S_i^2(x)/\pi(x)^2,
\end{equation}
where $S_i(x)=(x-r_i)^{(p-1)/2}\pi(\tau_{e_i}(x)),$
then (\ref{(x-ri)tau}) shows that $S_i(x)$ is in $\hat{R}[r_1,r_2,r_3][x]$, is of degree $(p-1)/2$, and reduces to $(x-r_i)^{(p-1)/2}\tilde{\ell_0}$ mod $p$. 
Putting these together,  (\ref{xp}) gives that 
$$x_p ={1\over 3}\sum_{i=1}^3 {{(x-r_i)S_i^2(x)}\over{\pi(r_i)^2\pi(x)^2}}={{S(x)}\over{\pi(x)^2}},$$
for some polynomial $S(x)$ which by symmetry is in $\hat{R}[x]$, is of degree $p$, and reduces to $ x^p$ mod $p$. Hence $x_p \equiv x^p/\tilde{\ell}_0^2\mod{p}$.

Likewise, taking a product of (\ref{N(x-ri)Form1}) over $i$, using that $p^6N_0(y^2)=\pi(r_1)^2\pi(r_2)^2\pi(r_3)^2,$ a unit in $\hat{R}$, we get from (\ref{yp}) and (\ref{N(x-ri)Form2}) that 
$$y_p^2=(N(y)/p^3N_0(y))^2=\prod_{i=1}^3{(x-r_i){S_i(x)^2}\over{\pi(r_i)^2\pi(x)^2}}=    \bigg({{yM(x)}\over{\pi(x)^3}}\bigg)^2,$$
where $M(x)=\prod_{i=1}^3 {{S_i(x)}\over {\pi(r_i)}} \in \hat{R}[x]$ has degree $(3p-3)/2$, and  $M(x)\equiv f(x)^{(p-1)/2} \mod{p}.$ 
We've seen that $y_p/y$ at the origin is $1/p^3$, and ${{M(x)}\over{\pi(x)^3}}$ at the origin is the lead coefficient of $M$ --- which is $1\mod{p}$ --- divided by $p^3.$
Therefore $y_p={{yM(x)}\over{\pi(x)^3}}$, and so
$y_p\equiv y^p/\tilde{\ell}_0^3\mod{p}$. Hence $t_p\equiv -x_p/y_p\equiv\tilde{\ell}_0 t^p\equiv H t^p\mod{p}$. 

Using these expressions for $x_p$ and $y_p$ and multiplying (\ref{Ep'Model}) by $\pi(x)^6$ shows that
$$A_{4,p}' S(x) \pi(x)^4+A_{6,p}' \pi(x)^6 \in \hat{R}[x].$$
A priori we only know that $A'_{4,p}$ and $A'_{6,p}$ lie in $\hat{K}$, but
since the constant term of $\pi(x)$ is a unit in $\hat{R}$, Gauss's lemma gives that
\begin{equation}\label{A4'S+A6'pi}
A_{4,p}' S(x)+A_{6,p}' \pi(x)^2 \in \hat{R}[x].
\end{equation}
The coefficient of $x^p$ in (\ref{A4'S+A6'pi}) is a unit in $\hat{R}$ times $A'_{4,p}$ so we get $A'_{4,p}\in\hat{R}$.
We conclude that $A_{6,p}' \pi(x)^2 \in \hat{R}[x],$ and as above, that $A'_{6,p}\in\hat{R}$.

Hence from (\ref{Ep'Model}) we get
$$y_p^2\equiv y^{2p}/H^6\equiv (x^3+A_4x+A_6)^p/H^6\equiv x_p^3+(A_4^p/H^4)x_p+A_6^p/H^6\mod{p}.$$
Therefore $A'_4\equiv A_4^p/H^4\mod{p}$, and $A'_6\equiv A_6^p/H^6\mod{p}.$

We now need to show that the construction gives that $A_{4,p}'$ and $A_{6,p}'$ have the desired weights. There are two key points. 

The first is that the factorization in the Weierstrass
Preparation Theorem uniquely gives us
$\psi_p(x)=\pi(x)\xi(x)$, with $\pi(x)$ of degree $(p-1)/2$ with lead coefficient $p$.
Indeed $P(x)$ was unique, and $\pi(x)$ is the unique constant multiple of $x^{(p-1)/2}P(1/x)$ which has lead coefficient $p$. By this uniqueness, for every $\kappa\in \mathbb Z_p^\times$,
$\psi_p(x)=\kappa^{-(p^2-1)}gr_\kappa(\psi_p(x)),$ 
and we therefore get $\pi(x)=\kappa^{-(p-1)}gr_\kappa(\pi(x)),$ so $\pi(x)$ has weight $p-1$. Hence 
each $\tilde{\ell_n}$ has weight $p-1-2n.$ The second point is that since
$f(x)$ has weight 6,
we can assign each $r_i$ a weight of 2 and turn $\hat{R_g}[r_1,r_2,r_3]$ into a graded ring which contains $\hat{R_g}$ as a graded subring. This gives us that
the $\pi(r_i)$ have weight $p-1$, so the weight of $N_0(x-r_i)$ is $p-1$, and from (\ref{(x-ri)tau}) that $\tau_{e_i}(x)$ has weight 2. Then (\ref{N(x-ri)Form1}) gives that $N(x-r_i)$ has weight $2p,$ and (\ref{N(x-ri)Form2}) says that
$S_i(x)$ has weight $2p-2$. Hence $x_p$ has weight $2$ and $y_p$ has weight $3.$ It follows that the expression in (\ref{A4'S+A6'pi}) has weight $2p+4,$ and its coefficient of $x^p$ is $A'_4$, which hence has weight {4}. Therefore $A'_6\pi(x)^2$ has weight $2p+4$, so $A'_6$ has weight $6$.

b) This follows from the effects of changing Weierstrass models, and that $H$ has weight $p-1.$
 
c) Since $M(x)$ has a lead coefficient that is a unit in $\hat{R}$, we have  \begin{equation}\label{t'formula}
t_{p/H}=-S(x)\pi(x)/H M(x)y=-(t^{2p}S(x))(t^{p}\pi(x))/H(t^{3p-3}M(x))(t^3 y)
\end{equation}
 is a power series in $\hat{R}[[t]]$ divided by an invertible power series in $\hat{R}[[t]]$, so lies in $\hat{R}[[t]]$. Note (\ref{t'formula}) expresses $t_{p/H}$
 as an odd function on $E$, 
 so  
 $t_{p/H}$ is odd in $t$. Set
 $v(t)=-t^{2p}S(x)/(t^{3p-3}M(x))(t^3 y)\in \hat{R}[[t]],$
so $t_{p/H}= {1\over H}(t^p\pi(x))v(t)$.
 
 The lead term of $t_{p/H}$ is $pt/H$ because $\phi^*(\omega_{p/H})={p\over H}\omega$, and 
 hence the lead term of $v(t)$ is $1$ since the lead term of $t^p\pi(x)$ is $pt$. Finally $v(t)$ mod $p$ is
 $$-t^{2p}x^p/t^{3p-3}y^{p-1}t^3y=(-x/ty)^p=1.$$
 
\end{proof}

\begin{remark} In Appendix I of \cite{MT} they define a division polynomial for any isogeny of elliptic curves normalized by a choice of invariant differentials on the curves. Using this definition, that $v(t)$ in (c) above has constant term 1 implies that $\pi(x)/H$ is the division polynomial of $\phi$ given the choices of $\omega$ and $\omega_{p/H}$ for invariant differentials on $E$ and $E'.$
\end{remark}

\begin{definition}\label{DefAlpha} Let $\alpha$ be the homomorphism from $R$ to $\hat{R}$ that sends $(A_4,A_6)\rightarrow (A_{4,p/H}', A_{6,p/H}')$, which 
Proposition \ref{moddingout} shows has the property that $\alpha(r)\equiv r^p\mod{p}$ for any $r\in R$. Hence $\alpha(H)$ is $H^p$ mod $p$, so is invertible in $\hat{R}$. Therefore $\alpha$ extends uniquely to $R_H$ and thence continuously to $\hat{R}$, where it reduces to the Frobenius mod $p$. We also denote this extension to $\hat{R}$ by $\alpha$. 
(At the end of the section we will also consider the analogous weight-preserving endomorphism $\alpha_0:\hat{R}\rightarrow \hat{R}$ determined by $(A_4,A_6)\rightarrow (A_{4,p}', A_{6,p}').$)
\end{definition}

From now on we take $E'_{p/H}$ as the defining model for $\cal E'$. We correspondingly define 
$$H'=H(A_{4,p/H}',A'_{6,p/H})\equiv H^p\mod{p},$$ so $H'$ is invertible in $\hat{R},$ and $\cal E'$ again defines a WOGEC. 
Likewise we set $\omega'=\omega_{p/H}$ and $t'=t_{p/H}$.
Let $D'$ be the derivation on $\hat{K}(E')$ defined by $D'(g)=dg/d\omega'$, which is to say, the derivation determined by $D'(x_{p/H})=2y_{p/H}.$ Since $\phi^*(\omega')=(p/H)\omega,$
for any $g\in \hat{K}(E')$, we have $D'(g)=(H/p)D(g)$. We showed in \S2.1 that $D$ has a unique extension to $\hat R[[t]]$, and likewise $D'$ has a unique extension to $\hat R[[t']]$, which implies that for any
Laurent series $a(t) \in \hat{R}((t'))$, we also have $D'(a)=(H/p)D(a)$.

Since $\cal E'$ is an $\alpha$-specialization of $\mathcal E$, using Corollary \ref{sigma-specialization}
we have a Laurent series $\zeta_{\cal E'/\hat{R}}(t')=\alpha(\zeta_{\cal E/\hat{R}})(t')$ such that 
$$D'(\zeta_{\cal E'/\hat{R}}(t'))=-x_{p/H}(t')+\alpha(\beta),$$
where $\alpha(\beta)$ is in $\hat{R}$.
Furthermore from (4), we get that $D'(t')=1/\alpha(W)(t').$

In parallel to the definitions at the beginning of this Section, we now  set $\tilde{\zeta}_{\cal E'/\hat{R}}(t')=\zeta_{\cal E'/\hat{R}}(t')-D't'/t'$, 
which since $D't'/t'=1/t'\alpha(W)(t')$,  is the same thing as $\alpha (\tilde{\zeta}_{\cal E/\hat{R}})(t').$

So we get from part (c) of Proposition \ref{moddingout} that:
\begin{corollary}\label{integrality}
We have $\tilde{\zeta}_{\cal E'/\hat{R}}(t')\in \hat{R}[[t]].$
\end{corollary}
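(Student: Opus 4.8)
The plan is to reduce the statement to the integrality of $\tilde{\zeta}_{\cal E/\hat{R}}(t)$ already established in \S2.2, combined with the fact that $t'=t_{p/H}$ lies in $\hat{R}[[t]]$, which is the substance of Proposition \ref{moddingout}(c). Once those two inputs are in hand the corollary is essentially formal; the genuine work has been front-loaded into that proposition.

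First I would record the identity, established in the paragraph immediately preceding the corollary, that
$$\tilde{\zeta}_{\cal E'/\hat{R}}(t')=\alpha(\tilde{\zeta}_{\cal E/\hat{R}})(t'),$$
where $\alpha(\tilde{\zeta}_{\cal E/\hat{R}})$ denotes the power series obtained by applying $\alpha$ to the coefficients of $\tilde{\zeta}_{\cal E/\hat{R}}$. This comes from unwinding the definitions $\zeta_{\cal E'/\hat{R}}(t')=\alpha(\zeta_{\cal E/\hat{R}})(t')$ and $\tilde{\zeta}_{\cal E/\hat{R}}(t)=\zeta_{\cal E/\hat{R}}(t)-1/(tW(t))$, together with the relation $D't'/t'=(1/t')\alpha(W)(t')$ and the fact that $\alpha$, being a ring homomorphism acting coefficientwise, commutes with the formation of reciprocals of power series.

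Next I would assemble the integrality inputs. By the construction of \S2.2 the polar term of $\zeta_{\cal E/\hat{R}}$ has been removed, so $\tilde{\zeta}_{\cal E/\hat{R}}(t)\in\hat{R}[[t]]$; since $\alpha$ is an endomorphism of $\hat{R}$ acting on coefficients, $\alpha(\tilde{\zeta}_{\cal E/\hat{R}})$ again lies in $\hat{R}[[t]]$. Finally, Proposition \ref{moddingout}(c) gives $t'\in\hat{R}[[t]]$ with lead term $pt/H$, hence with vanishing constant term. The conclusion then follows from the standard fact that if $P\in\hat{R}[[s]]$ and $Q\in\hat{R}[[t]]$ has zero constant term, then the composite $P(Q(t))$ again lies in $\hat{R}[[t]]$, each of its coefficients being a finite $\hat{R}$-polynomial in the coefficients of $P$ and $Q$ (finiteness being guaranteed precisely by the vanishing of the constant term of $Q$). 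Applying this with $P=\alpha(\tilde{\zeta}_{\cal E/\hat{R}})$ and $Q=t'$ yields $\tilde{\zeta}_{\cal E'/\hat{R}}(t')\in\hat{R}[[t]]$.

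As for the main obstacle: there is no real obstacle at this stage. The entire difficulty has already been resolved in Proposition \ref{moddingout}(c)—namely, showing that the parameter $t'$ of the quotient curve, a priori defined only over $K$, in fact has coefficients in $\hat{R}$. Given that, the corollary is a one-line substitution argument. The only point deserving explicit care is to confirm that the substitution $s\mapsto t'$ is legitimate, i.e. that $t'$ has no constant term so that the composite is a well-defined power series over $\hat{R}$; this is exactly the ``lead term $pt/H$'' clause of Proposition \ref{moddingout}(c).
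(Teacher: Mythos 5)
Your proposal is correct and matches the paper's argument: the paper also treats the corollary as an immediate consequence of the identity $\tilde{\zeta}_{\cal E'/\hat{R}}(t')=\alpha(\tilde{\zeta}_{\cal E/\hat{R}})(t')$ together with Proposition \ref{moddingout}(c), which puts $t'$ in $\hat{R}[[t]]$ with no constant term. You have merely spelled out the coefficientwise action of $\alpha$ and the formal substitution step that the paper leaves implicit.
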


To complete the proof of Theorem 1 we need to verify that
with our definitions of $\alpha$ and setting $s'=t'$, the coefficients of $\Lambda(t)$ also meet the requisite criteria
in part (c) of the Corollary to the Functional Equation Lemma.
For this we need two lemmas,  the first whose proof follows readily from the group law on $E/\hat{K}$,
and the second of which is due to V\'elu \cite{Ve} (see also \cite{Elk}).

We note that there is a unique way to extend $D$ to a $\overline{\hat{K}}$ derivation on $\overline{\hat{K}}(E)$, and we will also denote that extension by $D.$

\begin{lemma}\label{grouplaw} For any point $u\in E$ other than $O$,
$$D ({{D(x-x(u))}\over{x-x(u)}})= 2x-(\tau_u^*x+\tau_{-u}^*x) . $$
\end{lemma}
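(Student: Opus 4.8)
The plan is to reduce both sides to explicit rational functions of $x$ and $y$ over $\hat R[x(u),y(u)]$ and check that they coincide, the whole computation being powered by the explicit addition law on the Weierstrass model. First I would observe that $x(u)$ is the $x$-coordinate of the fixed point $u$, hence a constant in $K$, so that $D(x-x(u))=Dx=2y$ and the left-hand side is simply $D\bigl(2y/(x-x(u))\bigr)$. To differentiate this I need $Dy$: applying $D$ to $y^2=f(x)=x^3+A_4x+A_6$ and using $Dx=2y$ gives $2y\,Dy=f'(x)\,Dx=2y\,f'(x)$, hence $Dy=f'(x)=3x^2+A_4$. The quotient rule then yields
\[
D\!\left(\frac{2y}{x-x(u)}\right)=\frac{2(3x^2+A_4)(x-x(u))-4y^2}{(x-x(u))^2},
\]
and substituting $y^2=f(x)$ turns the numerator into a polynomial in $x$ with coefficients in $\hat R[x(u)]$.

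For the right-hand side I would invoke the explicit addition formula for the group law on $E/K$. Writing $u=(x(u),y(u))$ and $-u=(x(u),-y(u))$, the $x$-coordinates of $P\pm u$ for the generic point $P=(x,y)$ are $\bigl((y\mp y(u))/(x-x(u))\bigr)^2-x-x(u)$. Adding these two expressions, the terms linear in $y(u)$ cancel, leaving
\[
\tau_u^*x+\tau_{-u}^*x=\frac{2y^2+2y(u)^2}{(x-x(u))^2}-2x-2x(u),
\]
so that $2x-(\tau_u^*x+\tau_{-u}^*x)=4x+2x(u)-\bigl(2y^2+2y(u)^2\bigr)/(x-x(u))^2$.

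Finally I would clear the common denominator $(x-x(u))^2$ from both sides and verify the resulting polynomial identity, using $y^2=f(x)$ together with $y(u)^2=f(x(u))$ to eliminate the squares; both numerators collapse to $2x^3-6x^2x(u)-2A_4x-2A_4x(u)-4A_6$, which establishes the claim. There is no real obstacle here: the only work is the elementary polynomial bookkeeping in the last step, and the identity follows at once once the cancellations are carried out.
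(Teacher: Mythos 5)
Your proposal is correct, and it is exactly the computation the paper has in mind: the paper states this lemma with no written proof, remarking only that it ``follows readily from the group law on $E/K$,'' which is precisely your argument via the explicit addition formula. Your verification (using $Dx=2y$, $Dy=3x^2+A_4$, the chord slopes $(y\mp y(u))/(x-x(u))$, and the relations $y^2=f(x)$, $y(u)^2=f(x(u))$ to reduce both sides to the common numerator $2x^3-6x^2x(u)-2A_4x-2A_4x(u)-4A_6$) checks out in every detail.
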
\qed

\begin{lemma}[V\'elu]\label{velu} 
For $g\in \hat{K}(E),$ let $T(g)=\sum_{u\in C}\tau_u(g)$ be the trace, and $T'(g)=\displaystyle \sum_{u\in C-O}g(u)$.
Then the model $E_1'$ for $E'$ is 
$$y_1^2=x_1^3+A'_{4,1}x_1+A'_{6,1},$$
for some $A'_{4,1}, A'_{6,1}\in \hat{K}$, where $x_1=T(x)-T'(x),$ $y_1=T(y)-T'(y).$
\end{lemma}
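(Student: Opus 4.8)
The plan is to prove V\'elu's formula by Galois descent along the isogeny $\psi$, followed by a Riemann--Roch normalization. First I would verify that $x_1$ and $y_1$ descend to rational functions on $E'=E/G$, i.e. that they are invariant under translation by $G$. For $g\in G$ the constant $T'(f)=\sum'_{u\in G}f(u)$ satisfies $\tau_g^*T'(f)=T'(f)$, while $\tau_g^*T(f)=\sum_{u\in G}\tau_{u+g}^*f=T(f)$ since $u\mapsto u+g$ permutes $G$. Hence $\tau_g^*x_1=x_1$ and $\tau_g^*y_1=y_1$, so $x_1,y_1\in K(E)^G=K(E')$.

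Next I would record the parity and the local behaviour at the origin. Because $G$ is stable under $[-1]$, $x$ is even and $y$ is odd, the identity $\tau_u\circ[-1]=[-1]\circ\tau_{-u}$ gives $[-1]^*x_1=x_1$ and $[-1]^*y_1=-y_1$; in particular $T'(y)=\sum'_{u}y(u)=0$, the terms cancelling in the pairs $\pm u$. Writing $x_1=x+\sum'_{u}(\tau_u^*x-x(u))$, each summand $\tau_u^*x-x(u)$ is regular at $O$ and vanishes there, since for $u\neq O$ the only pole of $\tau_u^*x$ sits at $-u\in G\setminus\{O\}$; the analogous statement holds for $y_1=y+\sum'_{u}\tau_u^*y$, whose correction term vanishes at $O$ precisely because $T'(y)=0$. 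Using the expansions (2) this shows $x_1-t^{-2}\in t^2K[[t]]$ and $y_1+t^{-3}\in tK[[t]]$: so $x_1$ has leading term $t^{-2}$ with vanishing constant and $t^{\pm1}$ terms, $y_1$ has leading term $-t^{-3}$ with no $t^{-1}$ term, and every pole of $x_1,y_1$ on $E$ lies in $G$.

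I would then transport the analysis to $E'$. As $\psi$ is separable, hence unramified, the pole orders of $x_1,y_1$ at $O'=\psi(O)$ equal their pole orders at $O$ on $E$, namely $2$ and $3$, and since their only poles upstairs lie in $G=\psi^{-1}(O')$ they are regular on $E'\setminus\{O'\}$. A Riemann--Roch count gives that $\{1,x_1\}$ is a basis of $L(2O')$ and $\{1,x_1,y_1\}$ a basis of $L(3O')$; as $x_1$ realizes $E'$ as a double cover of $\mathbb{P}^1$ and $y_1\notin K(x_1)$, these functions generate $K(E')$, and lying together with $x_1^2,x_1^3,x_1y_1,y_1^2$ in the $6$-dimensional space $L(6O')$ they satisfy a single linear relation, a generalized Weierstrass equation. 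Matching leading $t$-expansions equates the $y_1^2$ and $x_1^3$ coefficients (both lead with $t^{-6}$); the oddness of $y_1$ kills the $x_1y_1$ and $y_1$ terms; and the relation $x_1-t^{-2}\in t^2K[[t]]$ together with the absence of a $t^{-1}$ term in $y_1$, read off at the $t^{-4}$ coefficient, kills the $x_1^2$ term. This produces exactly $y_1^2=x_1^3+A'_{4,1}x_1+A'_{6,1}$ with $A'_{4,1},A'_{6,1}\in K$.

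I expect the main obstacle to be the last step's bookkeeping: one must confirm that the descended functions have \emph{exactly} a double and a triple pole at $O'$ (not merely poles bounded by those orders), and that the leading and vanishing low-order terms of (2) propagate to $x_1,y_1$ so that the \emph{reduced} short Weierstrass form is obtained directly, with no further translation of $x_1$ needed to remove an $a_2$ term. Once the $G$-invariance and the unramifiedness of $\psi$ are in place, the remainder is formal.
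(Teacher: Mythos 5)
A preliminary remark: the paper offers no proof of Lemma \ref{velu} at all --- it is quoted from V\'elu \cite{Ve} (with \cite{Elk} as a further reference) and stated with the proof omitted --- so your proposal cannot be compared against an in-paper argument; it can only be judged on whether it establishes what the paper's later arguments actually need. On that score, your outline is the standard descent-plus-Riemann--Roch proof, and most of it is sound: translation-invariance puts $x_1,y_1$ in $K(E')$; the pairing $u\leftrightarrow -u$ together with the parities of $x$, $y$, $t$ gives $T'(y)=0$, $x_1\in t^{-2}+t^2K[[t]]$, and $y_1\in -t^{-3}+tK[[t]]$; unramifiedness of $\psi$ (automatic here, as $K$ has characteristic $0$) transfers pole orders down to $O'$; and the seven functions $1,x_1,y_1,x_1^2,x_1y_1,x_1^3,y_1^2$ in the six-dimensional space $L(6O')$ produce the cubic relation, with parity killing the $x_1y_1$ and $y_1$ terms and your $t^{-4}$-coefficient computation correctly killing the $x_1^2$ term. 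The ``bookkeeping'' you worry about at the end does go through.

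The genuine gap is that you prove less than the lemma asserts. In the paper's notation, fixed just before Proposition \ref{moddingout}, $E'_\gamma$ denotes the \emph{unique} Weierstrass model of $E'$ with $\psi^*\omega_\gamma/\omega=\gamma$; so Lemma \ref{velu} claims not merely that $(x_1,y_1)$ satisfy some short Weierstrass equation, but that the resulting model is the one normalized by $\psi^*(dx_1/2y_1)=\omega$, i.e.\ $\gamma=1$. Your argument shows $(x_1,y_1)$ give $E'_\gamma$ for \emph{some} $\gamma\in K$ and never identifies $\gamma$, yet this normalization is precisely the content the paper uses: in the proof of Proposition \ref{zetas}(a), the change-of-model identity $x_1=(p/H)^2x_{p/H}$ rests on $x_1$ belonging to the $\gamma=1$ model rather than to an unspecified rescaling. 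Fortunately the missing step follows from expansions you already have: from $x_1\in t^{-2}+t^2K[[t]]$ and $y_1\in -t^{-3}+tK[[t]]$ one computes that the $t$-expansion of $\psi^*(dx_1/2y_1)$ has lead coefficient $1$ (indeed it lies in $(1+t^4K[[t]])\,dt$), as does that of $\omega$. Both are regular differentials on $E$, and since $E$ has genus one the space of regular differentials is one-dimensional over $K$; hence $\psi^*(dx_1/2y_1)=c\,\omega$ for some $c\in K$, and comparing lead coefficients forces $c=1$. Adding that paragraph closes the gap and yields exactly the statement in the paper.
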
\qed



We can now prove:

\begin{proposition}\label{zetas} Keeping the above notation:

a) $$\zeta_{\cal E'/\hat{R}}(t')=H \zeta_{\cal E/\hat{R}}(t)+{H\over p}{{D\pi(x(t))}\over{\pi(x(t))}}.$$

 b)  \[ \Lambda(t)-{1\over p}\alpha(\Lambda)(t')=\int\tilde{\zeta}_{\cal E/\hat{R}}(t)\omega-{1\over p}\int\tilde{\zeta}_{\cal E'/\hat{R}}(t')\omega'\in \hat{R}[[t]],\]
 where the integrals are taken to have vanishing constant terms.
\end{proposition}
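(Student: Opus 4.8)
The plan is to establish (a) by applying the derivation $D$ to both sides and invoking the uniqueness built into Theorem \ref{theorem2}, and then to obtain (b) from (a) by a single integration. Write $L=\zeta_{\cal E'/\hat{R}}(t')$ and $R=H\zeta_{\cal E/\hat{R}}(t)+\frac{H}{p}\frac{D\phi_\psi(x)}{\phi_\psi(x)}$. Since $\phi_\psi(x)$ is $p$ times the product of the $x-x(u)$ over the $(p-1)/2$ pairs $\{u,-u\}$ of nontrivial points of $G$, its logarithmic derivative is $\sum_{\{u,-u\}}D(x-x(u))/(x-x(u))$, and applying Lemma \ref{grouplaw} termwise gives $D\bigl(D\phi_\psi(x)/\phi_\psi(x)\bigr)=\sum_{\{u,-u\}}(2x-\tau_u^*x-\tau_{-u}^*x)=px-T(x)$. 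With $D\zeta_{\cal E/\hat{R}}(t)=-x+\beta$ this yields $DR=H\beta-\frac{H}{p}T(x)$. On the other side $D=(p/H)D'$ and $D'\zeta_{\cal E'/\hat{R}}(t')=-x_{p/H}(t')+\beta'$ give $DL=\frac{p}{H}(-x_{p/H}(t')+\beta')$. To compare the nonconstant parts I use V\'elu (Lemma \ref{velu}) together with the rescaling in Proposition \ref{moddingout}: the $\gamma=1$ model has $x$-coordinate $T(x)-T'(x)$, and passing to $E'_{p/H}$, where $\psi^*\omega'=(p/H)\omega$, scales it by $(H/p)^2$, so $x_{p/H}=(H/p)^2(T(x)-T'(x))$. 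The $T(x)$-terms then cancel and $D(L-R)=\frac{H}{p}T'(x)+\frac{p}{H}\beta'-H\beta=:\delta$ is a constant in $K$.

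The remaining task is to show $\delta=0$ and $L=R$, and this is where the main obstacle lies: both $L$ and the second term of $R$ carry a factor $1/p$, so it is not even clear that $L-R$ has coefficients in $\hat{R}$. I would clear this using Lemma \ref{weierstrassprep}: from $t'=\frac{1}{H}t^p\phi_\psi(x)u(t)$ one gets $\frac{D\phi_\psi(x)}{\phi_\psi(x)}=\frac{Dt'}{t'}-p\frac{Dt}{t}-\frac{Du}{u}$, and since $\frac{H}{p}\frac{Dt'}{t'}=\frac{D't'}{t'}$ the polar contributions reorganize into
\[ L-R=\tilde\zeta_{\cal E'/\hat{R}}(t')-H\tilde\zeta_{\cal E/\hat{R}}(t)+\frac{H}{p}\frac{Du}{u}. \]
Each summand lies in $\hat{R}[[t]]$: the first by Corollary \ref{integrality}, the second because $\tilde\zeta_{\cal E/\hat{R}}(t)\in\hat{R}[[t]]$, and the third because $u\equiv1\bmod p$ forces $\frac{1}{p}\log u\in\hat{R}[[t]]$ (every term $p^{k-1}w^k/k$ of $\frac{1}{p}\log(1+pw)$ is $p$-integral). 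Hence $L-R\in\hat{R}[[t]]$ is an odd power series whose image under $D$ is the constant $\delta\in\hat{R}$; Lemma \ref{congruence} then forces $\delta=0$, so $L-R$ is an odd constant and therefore vanishes. This proves (a).

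For (b), the first equality records that applying $\alpha$ to the construction of $g$ and evaluating at $t'$ produces $\int\tilde\zeta_{\cal E'/\hat{R}}(t')\omega'$, so that $\frac{1}{p}\alpha(g)(t')=\frac{1}{p}\int\tilde\zeta_{\cal E'/\hat{R}}(t')\omega'$. For the integrality I differentiate: $Dg=\tilde\zeta_{\cal E/\hat{R}}(t)$, while $\alpha$-functoriality gives $D'\bigl(\alpha(g)(t')\bigr)=\tilde\zeta_{\cal E'/\hat{R}}(t')$ and hence, via $D=(p/H)D'$, that $D\bigl(\frac{1}{p}\alpha(g)(t')\bigr)=\frac{1}{H}\tilde\zeta_{\cal E'/\hat{R}}(t')$. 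Therefore $D\bigl(g-\frac{1}{p}\alpha(g)(t')\bigr)=\tilde\zeta_{\cal E/\hat{R}}(t)-\frac{1}{H}\tilde\zeta_{\cal E'/\hat{R}}(t')$, which by the reorganized form of (a) equals $\frac{1}{p}\frac{Du}{u}=D\bigl(\frac{1}{p}\log u\bigr)$. Since $\frac{1}{p}\log u\in\hat{R}[[t]]$ and both $g-\frac{1}{p}\alpha(g)(t')$ and $\frac{1}{p}\log u$ are even with vanishing constant term, I conclude $g-\frac{1}{p}\alpha(g)(t')=\frac{1}{p}\log u\in\hat{R}[[t]]$, which is (b). The crux throughout is the integrality bookkeeping of the $1/p$'s in (a); once Lemma \ref{weierstrassprep} packages them into $\frac{1}{p}\log u$ with $u\equiv1\bmod p$, everything else is forced by the uniqueness in Lemma \ref{congruence}.
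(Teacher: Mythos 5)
Your proof is correct and takes essentially the same route as the paper's: the same summation of Lemma \ref{grouplaw} over $\pm$-pairs to get $D\bigl(D\phi_\psi(x)/\phi_\psi(x)\bigr)=px-T(x)$, the same use of V\'elu together with the rescaling $x_{p/H}=(H/p)^2x_1$, the same integrality bookkeeping via Corollary \ref{integrality} and Lemma \ref{weierstrassprep} that packages the $1/p$'s into $\tfrac1p\log u(t)$ with $u\equiv 1 \bmod p$, and the same appeal to Lemma \ref{congruence} plus oddness to kill the constant. The only deviations --- computing $DL$ and $DR$ separately instead of chaining the paper's equations (7)--(8), and differentiating rather than integrating in part (b) --- are cosmetic reorganizations of the identical argument.
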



\begin{proof}

Let $ \epsilon=\zeta_{\cal E'/\hat{R}}(t')-H \zeta_{\cal E/\hat{R}}(t)-{H\over p}{{D\pi(x(t))}\over{\pi(x(t))}},$
which is \emph{a priori} in $\hat{K}((t)).$
Our goal is to show that $ \epsilon=0$: we will do this in stages.

We first claim that $ \epsilon\in\hat{R}[[t]].$
By Corollary \ref{integrality} it
suffices to show that 
$$D't'/t'-{H\over p}{{D\pi(x(t))}\over{\pi(x(t))}}-HDt/t \in \hat{R}[[t]].$$
But since $D't'/t'=(H/p)Dt'/t'$,  
Proposition \ref{moddingout} (c) 
shows that this expression can be written as $HDv(t)/pv(t),$ for some $v(t)\in 1+pt\hat{R}[[t]],$ which gives us our claim.

It follows that $\eta=D( \epsilon)\in \hat{R}[[t]].$ We will now show that as an element of $\hat{K}(E)$, $\eta\in \hat{K}.$

Working first in $\overline{\hat K}(E),$ we compute using Lemma \ref{grouplaw} and Lemma \ref{velu}
that:

$$D\bigg( {H\over p}{{D\pi(x(t))}\over{\pi(x(t))}}\bigg)={H\over p}\sum_{u\in (C-\{O\})/\pm 1}
D \bigg({{D(x-x(u))}\over{x-x(u)}}\bigg)$$
$$={H\over p}\sum_{u\in (C-\{O\})/\pm 1} (2x-(\tau_u^*x+\tau_{-u}^*x))$$
$$=Hx-{H\over p}\sum_{u\in C} \tau_u^*x=Hx-{H\over p}(x_1+T'(x)).$$
Now using Theorem 3 and working in  $\hat{K}((t))$ we have: 
$$\eta=D\bigg(\zeta_{\cal E'/\hat{R}}(t')-H \zeta_{\cal E/\hat{R}}(t)-{H\over p}{{D\pi(x(t))}\over{\pi(x(t))}}\bigg)$$
$$={p\over H}D'(\zeta_{\cal E'/\hat{R}}(t'))-HD(\zeta_{\cal E/\hat{R}}(t))-Hx+{H\over p}(x_1+T'(x))$$
$$={p\over H}(-x_{p/H}+\alpha(\beta))-H(-x+\beta)-Hx+{H\over p}(x_1+T'(x))$$ 
$$={p\over H}(-x_{p/H}+\alpha(\beta))-H\beta+{H\over p}(x_1+T'(x)).$$
Since $x_{p/H}={{H^2}\over {p^2}}x_1$, we get that $\eta={p\over H}\alpha(\beta)-H\beta+
{H\over p}T'(x)\in \hat{K}$ as desired.


Since $ \epsilon\in \hat{R}[[t]],$ we actually have\footnote{We also have that ${{T'(x)}\over p}={{-2 \tilde{\ell}_{{p-3}\over 2}}\over p^2} \in \hat{R}$, which is not hard to see directly. For example, that $\ell_{{p^2-3}\over 2}=0$ and ${d\over {dx}}\xi_\phi(x)\equiv 0\mod{p}$ implies that $\tilde{\ell}_{{p-3}\over 2}\equiv 0\mod{p^2}.$} that $\eta\in \hat{R}$. It follows then from Lemma \ref{congruence} that $\eta=0.$ Hence $ \epsilon$ is constant, i.e. is in $\hat{R}.$ Since by Proposition \ref{moddingout} (c) it is also an odd power series in $t,$ we have that $ \epsilon=0$.

To prove (b), note that from (a) we have
$${H\over p}\tilde{\zeta}_{\cal E/\hat{R}}(t)-{1\over p}\tilde{\zeta}_{\cal E'/\hat{R}}(t')= 
{H\over p}\zeta_{\cal E/\hat{R}}(t)-{1\over p}\zeta_{\cal E'/\hat{R}}(t')-{H\over p}{{Dt}\over t}+{1\over p}{{D't'}\over {t'}}=$$ 
$${{-H}\over {p^2}}{{D\pi(x(t))}\over{\pi(x(t))}}-{H\over p}{{Dt}\over t}+{H\over {p^2}}{{Dt'}\over {t'}}={{-H}\over {p^2}}{{D(t^p\pi(x(t))/H t')}\over{t^p\pi(x(t))/H t'}}=
{{-H}\over {p^2}}{{D(1/v(t))}\over{1/v(t)}}
={{H}\over {p^2}}{{D(v(t))}\over{v(t)}}.$$
Multiplying by $\omega'(t')={p\over H}\omega(t)$ and integrating gives 
$$\Lambda(t)-{1\over p}\alpha(\Lambda)(t')=
{{\log(v(t))}\over p}.$$
The proof is completed by the observation that 
since $v(t)\equiv 1\mod{p}$, we have
${{\log(v(t))}\over p}\in \hat{R}[[t]]$.\qed


\medskip
\noindent
{\it Proof of Theorem \ref{theorem1}}. 
Write $\tilde{\zeta}_{\cal E/\hat{R}}(t)=\sum_{n\geq 1}c_nt^n,$ 
so $\Lambda(t)=\sum_{n\geq 2}{{c_{n-1}}\over n}t^n.$
Then Proposition \ref{zetas} (b) 
says we can apply part (c) of Corollary \ref{FunEqCor} to the Functional Equation Lemma 
with $F=\hat{K}$, $s=t$, and $s'=t'$ to deduce
that $\tilde{\sigma}(t)$ --- and hence $\sigma(t)$ ---  has coefficients in $\hat{R}$. Therefore we can take $\sigma_{\cal E/\hat{R}}(t)$ to be $\sigma(t)$. 
As for uniqueness, it follows from the uniqueness of $\zeta_{E/\hat{R}}$ that any two possible candidates for $\sigma_{E/\hat{R}}(t)$ have as a ratio a
unit power series $e(t)\in\hat{R}[[t]]$ with lead term 1 such that $De/e=0$. Hence such an $e$ is 
a constant, so must be $1$.
\end{proof}

\begin{remark}\label{modular}
We can gain some insight into our construction by considering various quantities as $p$-adic modular forms.
Let $\cal M$ denote the ring of level-one $p$-adic modular forms (with growth condition ``$r=1$'' \cite{K}).
One standardly embeds $R$ into $\cal M$ 
by setting  $i(A_4,A_6)=(-\frac{\mathbb E_4}{48}, \frac{\mathbb E_6}{864}),$ where $\mathbb E_{2n}$ is the normalized Eisenstein series of weight $2n$. Since $H$ gives the Hasse invariant for an elliptic curve in the form (1) over a field of characteristic $p$, $i(H)\equiv\mathbb E_{p-1}\mod{p}$,
which is invertible in $\cal M$ since $\cal M$ is $p$-complete. 
Hence we can extend $i$ to an embedding of $R_H$, which then
extends to an embedding of $\hat{R}$ into $\cal M$,
using again that $\cal M$ is $p$-complete.

By considering their construction of the $p$-adic sigma function applied to the Tate curve, Mazur and Tate computed the $q$-expansion of $i(\beta)$ and showed
\begin{equation}\label{iBeta}
i(\beta)={1\over {12}}\mathbb E_2
\end{equation} 
(n.b. the sign correction in \cite{MST}).

Now let $\alpha$ and $\alpha_0$ be as in Definition \ref{DefAlpha}.

 

Recall (see e.g. \cite{G}, II.2) that the $\Frob$ operator on $\cal M$ is obtained by first applying the $V$ operator 
which maps modular forms of level 1 to forms on $\Gamma_0(p)$ (by replacing $q$ by $q^p$ in their $q$-expansions) and then embedding the latter into forms of level 1.
It follows from the results in \S3 of \cite{K} 
that $\alpha_0$ is a lift of $\Frob$ to $\hat{R}$, that is, $\Frob\circ\, i=i\circ \alpha_0$. Note that our definition of $\beta$ as a limit of $\beta_n=J_n/H_n$ (see Proposition \ref{betaprop}) shows immediately that $i(\beta)$ is a $p$-adic modular form of  weight 2, and hence that $\alpha(\beta)=H^2 \alpha_0(\beta)$.  

In the course of the proof of Proposition \ref{zetas} we showed that
$$p\alpha_0(\beta)={p\over {H^2}}\alpha(\beta)=\beta-{1\over{p}}T'(x),$$
where $T'(x)$ is defined in Lemma \ref{velu}, and is $-2\tilde{\ell}_{{p-3}\over 2}/p$ in the notation of (\ref{piFactor}).

Applying the embedding $i$ gives
$$p \Frob(i(\beta))=i(\beta)-{{i(T'(x))}\over {p}},$$
which in light of (\ref{iBeta}), is the statement that
$$ i(T'(x)/p)={{1-p}\over {12}}\mathbb E_2^*,$$
where $\mathbb E_2^*:=(\mathbb E_2-p\Frob(\mathbb E_2))/(1-p)$ is the weight 2 $p$-adic Eisenstein series described in \cite{Se},
whose $q$-expansion is $1-{{24}\over{1-p}}\sum_{n\geq 1}\sigma^*(n)q^n$, where $\sigma^*(n)$ is the sum of the divisors of $n$
prime to $p$.
\end{remark}

\section{Universal equivalent formulations and specializations.}

Recall that if $A$ be a complete DVR of residue characteristic $p>3$ and $E/A$ an elliptic curve with good ordinary or multiplicative reduction over $A$, given by a Weierstrass model 
\begin{equation}\label{WeierModel}
y^2=x^3+a_4 x+ a_6,~t=-x/y, ~\omega=dx/2y,
\end{equation}
then Mazur and Tate attached a $p$-adic sigma function $\sigma_{E/A}$ to this model, which they proved is the unique power series in $A[[t]]$, odd under $t$ goes to $-t$, with lead term $t$, that satisfies any of a number of equivalent conditions. 

If $A$ has characteristic 0, one of these equivalent conditions 
characterizing $\sigma_{E/A}(t)$ is that $$D(D(\sigma_{E/A}(t))/\sigma_{E/A}(t))+x(t)\in A,$$ where $D$ acts on power series as in (\ref{Daction}). However, if $A$ has characteristic $p$, this condition does not uniquely characterize $\sigma_{E/A}$. On the other hand, Mazur and Tate show that for all complete DVRs $A$, $\sigma_{E/A}$ is uniquely characterized by the property that for all $u,v$ in the kernel of reduction $E_0(E/A)$,
$${{\sigma_{E/A}(u+_E v)\sigma_{E/A}(u-_Ev)}\over {\sigma_{E/A}(u)^2\sigma_{E/A}(v)^2}}=x(v)-x(u),$$ where $+_E$ and $-_E$ are denoting that the operations are taking place in
the group law of $E$.

We will now show that for our WOGEC $\cal E/\hat{R}$, that in an appropriate sense,
$\sigma_{\cal E/\hat{R}}$ universally satisfies this condition.

For parameters $t_1$ and $t_2,$ let $\cal F={\cal F}_{\cal E/\hat R}(t_1,t_2)$ be the formal group law in $\hat R[[t_1,t_2]]$ as in \cite{Si}, IV, \S1 for $\cal E/\hat R$, which we also write as $t_1+_{\cal F} t_2$,
 the power series gotten by calculating the expansion of $t$, in terms of $t_1$ and $t_2$, evaluated at the sum in the group law on $E$ of the points $(x(t_1),y(t_1))$ and $(x(t_2),y(t_2))$ of $E$. Then $\omega(t)=W(t)dt$ is an invariant differential on $\cal F$, i.e. [S, IV, \S 4], so
$$W(t_1+_{\cal F} t_2){d\over{dt_1}}(t_1+_{\cal F} t_2)=W(t_1),$$
and it follows that $D$ acts as an invariant derivation on $\cal F$, i.e. if $D_1$ denotes $D$ acting on $t_1$ while treating
$t_2$ as a constant, $$D_{1}(t_1+_{\cal F}t_2)={{d(t_1+_{\cal F}t_2)/dt_1}\over {W(t_1)}} =1/W(t_1+_{\cal F}t_2)=D(t)|_{t=t_1+_{\cal F}t_2}.$$ 
It follows from standard properties of derivations that for any power series $e\in \hat{R}[[t]]$ that 
$$D_1(e(t_1+_{\cal F}t_2))=D(e(t))|_{t=t_1+_{\cal F}t_2}.$$
We also write  $t_1-_{\cal F} t_2$ for subtraction in the formal group, which since $t$ is an odd parameter on $\cal E$, is the same as $t_1+_{\cal F}(-t_2)$, so we also have
$$D_{1}(e(t_1-_{\cal F}t_2))=D(e(t))|_{t=t_1-_{\cal F}t_2}.$$

\begin{proposition}\label{univtheoremofthesquare} 
As elements in the fraction field of $\hat{R}[[t_1,t_2]]$,
$${{\sigma_{\cal E/\hat{R}}(t_1+_{\cal F} t_2)\sigma_{\cal E/\hat{R}}(t_1-_{\cal F} t_2)}\over{\sigma_{\cal E/\hat{R}}^2(t_1)\sigma_{\cal E/\hat{R}}^2(t_2)}}=x(t_2)-x(t_1).$$
\end{proposition}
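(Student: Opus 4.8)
The plan is to reduce the identity to a statement about the universal zeta function by taking logarithmic derivatives in $t_1$, and then to pin down the resulting constant of integration by a parity argument. Write $Q(t_1,t_2)$ for the ratio of the two sides, viewed in the fraction field of $\hat R[[t_1,t_2]]$; the goal is to show $Q=1$. First I would differentiate the logarithm of the left-hand side by $D_1$. Since $\zeta_{\cal E/\hat R}=D\sigma_{\cal E/\hat R}/\sigma_{\cal E/\hat R}$ and, by the invariance of $D$ on $\cal F$ recorded above, $D_1(h(t_1\pm_{\cal F}t_2))=(Dh)|_{t=t_1\pm_{\cal F}t_2}$, this gives (the $\sigma^2(t_2)$ factor drops out, as $t_2$ is held constant)
$$D_1\log(\text{LHS})=\zeta_{\cal E/\hat R}(t_1+_{\cal F}t_2)+\zeta_{\cal E/\hat R}(t_1-_{\cal F}t_2)-2\zeta_{\cal E/\hat R}(t_1).$$
Applying $D_1$ once more and using Theorem~\ref{theorem2} in the form $D\zeta_{\cal E/\hat R}=-x+\beta$, together with the same invariance, the three copies of $\beta$ cancel and I obtain
$$D_1^2\log(\text{LHS})=2x(t_1)-\bigl(x(t_1+_{\cal F}t_2)+x(t_1-_{\cal F}t_2)\bigr).$$

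For the right-hand side, set $u=(x(t_2),y(t_2))$, so that $x(t_1\pm_{\cal F}t_2)=(\tau_{\pm u}^{*}x)(t_1)$; since $x(t_2)$ is a $D_1$-constant, Lemma~\ref{grouplaw} applied with this $u$ (and $D=D_1$) yields exactly $D_1^2\log(x(t_1)-x(t_2))=2x(t_1)-(\tau_u^{*}x+\tau_{-u}^{*}x)(t_1)$, which matches the display above. As $D_1\log(x(t_2)-x(t_1))=D_1\log(x(t_1)-x(t_2))$, I conclude that $D_1^2\log Q=0$.

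It remains to integrate. From $D_1^2\log Q=0$ the function $D_1\log Q$ is independent of $t_1$, so $\log Q=A(t_2)\lambda(t_1)+B(t_2)$, where $\lambda(t_1)=\int W(t_1)\,dt_1$ is the formal logarithm of $\cal F$. The normalization comes from the lead terms: since $\sigma_{\cal E/\hat R}$ is odd with lead term $t$, both the numerator of $Q$ and $x(t_2)-x(t_1)$ are Laurent series in $t_1$ with leading term $-t_1^{-2}$, so $Q$ is a power series in $t_1$ with $Q(0,t_2)=1$; hence $B(t_2)=0$. Finally, because $\sigma_{\cal E/\hat R}$ is odd, one checks that both sides of the asserted identity are even in $t_1$ (under $t_1\mapsto-t_1$ the two translated factors of the numerator exchange, each picking up a sign that cancels, and $x$ is even); thus $\log Q$ is even in $t_1$, while $\lambda(t_1)$ is odd since $W$ is even. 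This forces $A(t_2)=0$, so $\log Q=0$ and $Q=1$, as desired.

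The computations are mechanical once set up; I expect the main obstacle to be the careful formal bookkeeping: justifying the logarithmic-derivative identities in the fraction field of $\hat R[[t_1,t_2]]$ (in particular that $D_1\log\sigma_{\cal E/\hat R}(t_1\pm_{\cal F}t_2)=\zeta_{\cal E/\hat R}(t_1\pm_{\cal F}t_2)$ via invariance of $D$ on $\cal F$), verifying that the three $\beta$'s cancel, and correctly aligning the translations $\tau_{\pm u}$ with the formal sums $t_1\pm_{\cal F}t_2$ so that Lemma~\ref{grouplaw} applies verbatim. Pinning down the integration constant---here achieved by the parity of $\sigma_{\cal E/\hat R}$ rather than by an explicit constant-term computation---is the one genuinely nontrivial final step.
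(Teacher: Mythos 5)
Your proposal is correct and follows essentially the same route as the paper's own proof: both reduce the identity to the second logarithmic derivative in $t_1$ via Theorem~\ref{theorem2}, the invariance of $D$ on $\cal F$, and Lemma~\ref{grouplaw} (with the $\beta$'s cancelling), and both then recover the identity by a parity argument in $t_1$ plus a comparison of leading/constant terms. The only differences are bookkeeping: you integrate $\log Q$ against the formal logarithm $\lambda(t_1)$ and fix the constants $A(t_2), B(t_2)$, where the paper works directly with the logarithmic derivatives, killing $\mu(t_2)$ by oddness and pinning the ratio $\nu(t_2)=1$ by the Laurent lead term.
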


\begin{proof} 
Let $\sigma_{\cal E/\hat{R}}(t_1+_{\cal F} t_2)\sigma_{\cal E/\hat{R}}(t_1-_{\cal F} t_2)/\sigma_{\cal E/\hat{R}}^2(t_1)\sigma_{\cal E/\hat{R}}^2(t_2)=\theta(t_1,t_2).$
By Theorems \ref{theorem1} and \ref{theorem2} we have
$$D_1({{D_1(\sigma_{\cal E/\hat{R}}(t_1+_{\cal F} t_2))}\over {\sigma_{\cal E/\hat{R}}(t_1+_{\cal F} t_2)}})=D_1(\zeta_{\cal E/\hat{R}}(t_1+_{\cal F} t_2))=-x(t_1+_{\cal F} t_2)+\beta.$$
Applying this also with $t_2$ replaced by $-t_2$, then Theorem \ref{theorem2} and Lemma \ref{grouplaw} imply that 
the second logarithmic derivations in $t_1$
of $\theta(t_1,t_2)$ and $x(t_2)-x(t_1)$ agree, so there is an element $\mu(t_2)$ in the fraction field of $\hat{R}[[t_2]]$ such that 
$${{D_1\theta(t_1,t_2)}\over{\theta(t_1,t_2)}}-{{D_1(x(t_2)-x(t_1))}\over{x(t_2)-x(t_1)}}=\mu(t_2).$$
Since the lefthandside of this is odd in $t_1$, $\mu(t_2)=0$. Hence $$\theta(t_1,t_2)=\nu(t_2)(x(t_2)-x(t_1))$$ 
for some $\nu(t_2)$  in the fraction field of $\hat{R}[[t_2]]$.
Since $\theta(t_1,t_2)$ is odd under swapping $t_1$ and $t_2$, $\nu(t_2)=\nu(t_1)$ must be in $\hat{R}$.
Comparing the lead terms in the expansions of both sides of this as Laurent series in $t_1$ and $t_2$ shows that $\nu=1$.
\end{proof}

\begin{remark} One could also fashion a proof of the Proposition using the Lefschetz Principle and
properties of the complex sigma function.
\end{remark}


We now have one of our defining goals:

\begin{theorem}\label{specialize}
Let $p>3$ and $A$ be a complete discrete valuation ring of residue characteristic $p$, and $E$ an elliptic curve over $A$ in Weierstrass form (\ref{WeierModel}) with ordinary good (or multiplicative) reduction.  From Proposition ~\ref{specialization} there is a homomorphism
$\rho: \hat{R}\rightarrow A$ such that $\rho(A_4)=a_4$ and $\rho(A_6)=a_6$, which makes
$E$ a $\rho$-specialization $\cal E_\rho$.

Then the specialization $\widetilde{\sigma_{\cal E/\hat{R}}}$ 
of the universal $p$-adic sigma function $\sigma_{\cal E/\hat{R}}$ induced by $\rho$ 
is the Mazur-Tate $p$-adic sigma function $\sigma_{E/A}$.
\end{theorem}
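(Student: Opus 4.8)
The plan is to show that the specialized series $\widetilde{\sigma_{E/\hat R}} := \rho(\sigma_{\cal E/\hat R})$ satisfies one of the properties that \emph{uniquely} characterizes the Mazur--Tate function $\sigma_{E/A}$, so that the two must coincide. The key observation is that everything in the construction of $\sigma_{\cal E/\hat R}$ is compatible with base change: the expansions of $\omega$, $x$, $y$ in $t$ (2), the derivation $D$, and the formal group law $\cal F$ all specialize correctly under $\rho$, because $\rho$ is a ring homomorphism commuting with the $p$-adic topology (here one uses that $A$ is $p$-complete and $\rho(H)$ is a unit, as stated in the theorem). So $\rho$ carries the universal relation of Proposition \ref{univtheoremofthesquare} to the corresponding relation over $A$.

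First I would split into two cases according to $\car A$. If $A$ has characteristic $0$, the cleanest route is the differential characterization: applying $\rho$ to the defining property $D(D(\sigma_{\cal E/\hat R})/\sigma_{\cal E/\hat R}) + x = \beta$ of Theorem \ref{theorem1}, and using that $\rho$ commutes with $D$ and with $t$-expansions, gives $D(D(\widetilde{\sigma_{E/\hat R}})/\widetilde{\sigma_{E/\hat R}}) + x(t) = \rho(\beta) \in A$. Since $\widetilde{\sigma_{E/\hat R}}$ is odd with lead term $t$ (these properties are preserved under $\rho$), it satisfies exactly the differential condition that Mazur--Tate use to characterize $\sigma_{E/A}$ in characteristic $0$, so $\widetilde{\sigma_{E/\hat R}} = \sigma_{E/A}$.

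For $A$ of characteristic $p$ the differential condition no longer pins down the function, so instead I would specialize the \emph{theorem-of-the-square} identity. Applying $\rho$ (extended to act on coefficients of power series in $t_1,t_2$) to Proposition \ref{univtheoremofthesquare}, and using that $\rho$ respects the formal group operations $t_1 +_{\cal F} t_2$, $t_1 -_{\cal F} t_2$ and the expansion of $x$, yields
\[
\frac{\widetilde{\sigma_{E/\hat R}}(u +_E v)\,\widetilde{\sigma_{E/\hat R}}(u -_E v)}{\widetilde{\sigma_{E/\hat R}}(u)^2\,\widetilde{\sigma_{E/\hat R}}(v)^2} = x(v) - x(u)
\]
as an identity over $A$, which is precisely the property that Mazur and Tate prove characterizes $\sigma_{E/A}$ uniquely for \emph{all} $A$. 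Again using that $\widetilde{\sigma_{E/\hat R}}$ is odd with lead term $t$, the uniqueness forces $\widetilde{\sigma_{E/\hat R}} = \sigma_{E/A}$; since this case also covers characteristic $0$, one may in fact dispense with the case split and argue uniformly.

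The main obstacle, and the place to be careful, is justifying that the specialization genuinely commutes with all the analytic operations involved --- the $t$-expansion of elements in the fraction field of the completed local ring, the action of $D$, and especially the passage through the formal group law and through the ratio of power series in Proposition \ref{univtheoremofthesquare}, where one divides by series whose images under $\rho$ must remain invertible. This is where the hypotheses $\rho(H) \in A^\times$ and $p$-completeness of $A$ do the real work: they guarantee that $\rho$ extends to $\hat R$ and that denominators (powers of $t$ aside) remain units, so no specialization is being taken through a pole. Once these compatibilities are recorded --- most of which are immediate from the base-change remark following (2) --- the identification with the Mazur--Tate characterization is, as promised in the introduction, a formality.
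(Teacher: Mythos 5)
Your proposal is correct and is essentially the paper's own proof: the paper likewise specializes the theorem-of-the-square identity of Proposition \ref{univtheoremofthesquare} through $\rho$ together with the evaluation $t_1\mapsto t(u)$, $t_2\mapsto t(v)$ for $u,v\in E_0(E/A)$ (using that the specialized formal group law is the group law on the kernel of reduction), and then invokes the Mazur--Tate characterization that is valid over every complete DVR. Your characteristic-$0$ detour via the differential characterization is harmless but, as you yourself observe, unnecessary; the paper argues uniformly via the square identity alone.
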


\begin{proof} Note that if $\tilde{\cal F}$ is the  formal group law over $A$ 
gotten by specializing the coefficients of $\cal F$ via $\rho$,
then
$\tilde{\cal F}$ is a formal group law on the kernel of reduction $E_0(E/A)$ of $E/A$. 
Hence for any $u$ and $v$ in $E_0(E/A)$, the specialization $\hat{R}[[t_1,t_2]]\rightarrow A$ induced by $\rho$
and the map $t_1\rightarrow t(u)$, $t_2\rightarrow t(v)$, specialize the result of Proposition \ref{univtheoremofthesquare} 
to the equation,
$${{\widetilde{\sigma_{\cal E/\hat{R}}}(u+_E v)\widetilde{\sigma_{\cal E/\hat{R}}}(u-_E v)}\over{\widetilde{\sigma_{\cal E/\hat{R}}}^2(u)\widetilde{\sigma_{\cal E/\hat{R}}}^2(v)}}=x(v)-x(u),$$
where $x$, $y$, and $t=-x/y$ denote the functions on $E/A$ given in (\ref{WeierModel}).
Therefore by Theorem 3.1 of \cite{MT}, $\widetilde{\sigma_{\cal E/\hat{R}}}=\sigma_{E/A}$. 
\end{proof}

 


\section{Recovering the universal $p$-adic sigma functor.}

Now let $A$ be a complete DVR of residue characteristic $p>2$, and $E/A$ an elliptic curve with good ordinary or multiplicative reduction over $A$. Mazur and Tate constructed their $p$-adic sigma function for $E/A$ without the need to choose a model for $E$, defining it for a pair $(E,\omega)$ where 
$\omega$ is a choice of invariant differential on $E/A,$ and denoting it as $\sigma_{(E,\omega)/A}.$ 

As we noted in the Introduction, Mazur and Tate showed that their construction carried over to more general base schemes.

Let $\mathcal S$ denote the category of formal adic schemes for which $p$ can be taken as an ideal of definition.
For any $S\in\mathcal S$ and $n\geq 1$, let $S_n$ be the scheme cut out by the ideal generated by $p^n$. Then (see section 2 of \cite{BG}) an ordinary elliptic curve $E/S$ is a compatible system of ordinary elliptic curves $E_n$ over $S_n$ as $n$-varies.

Mazur and Tate constructed a  ``$\sigma$-functor'' for ordinary elliptic curves (along with a choice of non-vanishing relative 1-differential) over $\cal S$ which is uniquely determined by being compatible with base change, and by recovering their construction above for an elliptic curves with good ordinary reduction over a $p$-complete DVR $A$ (whose reductions mod $p^n$ can be viewed as an elliptic curve over $\Spf(A)$).

Let us recall what this functor does (for details see [MT]). 
For $S\in \mathcal S$, suppose $(E,\omega)$ is an ordinary elliptic curve over $S$ with $\omega$ a non-vanishing relative $1$-differential over $S$.  
Let $E^f_{/S}$ be  the  formal  completion  of $E$ along  the  zero-section  restricted  to $S_1$. 
They defined the sigma functor as a rule that assigned to each such $(E,\omega)$ a formal parameter $\sigma(E,\omega)_{/S}$ for the formal group $E^f_{/S}$ such that $d\sigma(E,\omega)_{/S}/\omega$ restricts to $1$ on the zero section of $E/S$.


 We will now sketch how our universal $p$-adic sigma function recovers the Mazur-Tate $\sigma$-functor when $p>3$. 
For starters, let $E$ be an ordinary elliptic curve over any scheme $S$ for which $p$ is nilpotent, and $\omega$ a choice of non-vanishing relative 1-differential over $S$. Let $U_i=\Spec(R_i)$
be an open cover of $S$, so $p$ is nilpotent in $R_i$ and hence $R_i$ is $p$-complete.
There is then a unique Weierstrass model $W_i$ for $E_{/{R_i}}$ of the form
$$y_i^2=x_i^3+\alpha_{4,i}x_i+\alpha_{6_i},$$ $\alpha_{4,i}, \alpha_{6,i}\in R_i$,
such that $\omega|_{U_i}=dx_i/2y_i.$ Let $t_i=-x_i/y_i$. Since such an elliptic curve is a WOGEC, by Proposition \ref{rho-specialization}, $W_i$ is  uniquely a $\rho$ specialization of $\cal E$, and applying $\rho$ to the coefficients
of $\sigma_{\mathcal E, \hat{R}}$ gives a power series $\sigma_i\in R_i[[t_i]]$ with lead term $t_i$.
In other words, if $E^f_{/S}$ is the formal completion of $E$ along the $0$-section, $\sigma_i$ is a parameter for the formal group $E^f_{/R_i}.$
By the uniqueness of these Weierstrass models, the corresponding power series agree on any overlap among the $U_i$, and the $\sigma_i$ therefore piece together to give a well-defined parameter $\hat{\sigma}(E,\omega)_{/S}$ for $E^f_{/S}$ such that $d\hat{\sigma}(E,\omega)_{/S}/\omega$ restricts to $1$ on the zero section of $E/S$.

Now take $S\in \cal S$, and let $(E,\omega)$ be an ordinary elliptic curve over $S$ with $\omega$ a non-vanishing relative 1-differential.
Then $E_n/S_n$
is an ordinary elliptic curve over a scheme where $p$ is nilpotent, and $\omega_n:=\omega|_{E_n}$ is a non-vanishing relative 1-differential, so the above defines a unique parameter $\hat{\sigma}_{(E_n,\omega_n)/S_n}$ for ${E_n^f}_{/S_n}.$
The uniqueness means that as $n$ varies they coherently define a unique parameter 
$\hat{\sigma}_{(E,\omega)/S}$ for $E^f_{/S}.$


 

That $\hat{\sigma}_{(E,\omega)/S}={\sigma}_{(E,\omega)/S}$ follows from the uniqueness of the $\sigma$-functor
and from Theorem \ref{specialize}, 
using that specialization commutes with base change.

\end{document}